\spnewtheorem*{condition}{Condition}{\bf}{\rm}
\numberwithin{equation}{section}
\newcommand{\envelope}{(\raisebox{-.5pt}{\scalebox{1.45}{\Letter}}\kern-1.7pt)}
\DeclareMathOperator{\sign}{sign}
\begin{document}

\title{Joint Extremal Behavior of Hidden and Observable Time Series with an Application to GARCH Processes%\thanks{Grants or other notes
%about the article that should go on the front page should be
%placed here. General acknowledgments should be placed at the end of the article.}
}
\subtitle{}

\titlerunning{Joint Extremal Behavior of Hidden and Observable Time Series}        % if too long for running head

\author{Andree Ehlert         \and
	Ulf-Rainer Fiebig      \and
        \\  Anja Jan\ss en          \and
        Martin Schlather
}

%\authorrunning{Short form of author list} % if too long for running head

\institute{Andree Ehlert \at
Institute of Economics, Leuphana University of L\"{u}neburg, Scharnhorststr.~1, D-21335 L\"{uneburg}, Germany.
              \email{ehlert@uni-lueneburg.de}          
              \and
            Ulf-Rainer Fiebig \at
	      Institute for Mathematical Stochastics, Georg-August-University G\"{o}ttingen, Goldschmidtstr.~7,
	      D-37077 G\"{o}ttingen, Germany.
              \email{urfiebig@math.uni-goettingen.de} 
	  \and
          Anja Jan\ss en \envelope\at
	      Department of Mathematics, SPST, University of Hamburg,
	      Bundesstr.~55,
	      D-20146 Hamburg, Germany.
              \email{anja.janssen@math.uni-hamburg.de} 
 	  \and
           Martin Schlather \at
	      School of Business Informatics and Mathematics, University of Mannheim, D-68131 Mannheim, Germany. \email{schlather@math.uni-mannheim.de} 
}

\date{Received: date / Accepted: date}
% The correct dates will be entered by the editor

\maketitle

\begin{abstract}
We study the behavior of a real-valued and unobservable process~$(Y_t)_{t\in\mathbb Z}$ under an extreme event of a related process~$(X_t)_{t\in\mathbb Z}$ that is observable. Our analysis is motivated by the well-known GARCH model which represents two such sequences, i.e.\ the observable log returns of an asset as well as the hidden volatility process. 
Our results complement the findings of Segers [J.~Segers, Multivariate regular variation of heavy-tailed Markov chains, arXiv:math/0701411 (2007). Available online: http://arxiv.org/abs/math/0701411] and Smith [R.~L.~Smith, The extremal index for a Markov chain. J.~Appl.\ Prob.\ (1992)] for a single time series.
We show that under suitable assumptions their concept of a tail chain as a limiting process
is also applicable to our setting.
Furthermore, we discuss existence and uniqueness of a limiting process under some weaker assumptions. 
Finally, we apply our results to the GARCH$(1,1)$ case.
\keywords{ARCH processes \and GARCH processes \and extremal index \and joint extremal behavior\and  multivariate regular variation \and tail chain \and time series }
% \PACS{PACS code1 \and PACS code2 \and more}
\subclass{60G70 \and 60J05}
\end{abstract}

\section{Introduction}\label{s:intro}
An extensive class of financial time series models is based on two interrelated processes. In particular, many models include an unobservable part that reflects a certain regime or the volatility of the process. A well-known example is given by the GARCH family.
It is typically applied in order to model financial log returns where the unobservable volatility process drives the observable price of an asset.
In the following, let~$(X_t)_{t\in\mathbb Z}$ denote such a process and~$(Y_t)_{t\in\mathbb Z}$ its unobservable counterpart. Let both~$(X_t)_{t\in\mathbb Z}$ and~$(Y_t)_{t\in\mathbb Z}$ be univariate.
A common approach for the analysis of the extremal behavior of such interrelated processes focusses on the joint sequence $(Z_t)_{t \in \mathbb{Z}}:=(X_t,Y_t)_{t \in \mathbb{Z}}$.
More precisely, the process is studied under the condition $\{\|Z_0\|>x\}$ for~$x \to \infty$ and an arbitrary norm~$\|\cdot\|$ on~$\mathbb{R}^2$. The connection of this approach to the concept of multivariate regular variation has been discussed extensively in~\cite{basrak}. We shall follow a more natural point of view where the process~$(Y_t)_{t \in \mathbb{Z}}$ is unobservable. That is, we analyze its limiting behavior under the (observable) event $\{|X_0|>x\}$ as $x \to \infty$. Hence, for $-\infty<m\le n <\infty$ we focus on the limit distribution of 
\begin{equation}\label{inquestion}\mathcal{L}\left(\frac{Y_m}{x}, \dots, \frac{Y_n}{x} \;\middle\vert\; |X_0|>x\right) \end{equation}
as $x\to\infty$.
We assume~$(Y_t)_{t \in \mathbb{Z}}$ to be of a simple Markovian structure, i.e.\
\begin{equation}\label{Y}Y_t=\Phi(Y_{t-1},\epsilon_t), \;\;\; t \in \mathbb{Z},\end{equation}
for some measurable mapping $\Phi:\mathbb{R}\times \mathbb{S}\to \mathbb{R}$ and some sequence~$(\epsilon_t)_{t \in \mathbb{Z}}$ of i.i.d.\ innovations on a measurable space $(\mathbb{S}, \mathcal{S})$ . Additionally, we will require the sequence of innovations~$(\epsilon_t)_{t>s}$ to be independent of~$(Y_t)_{t\leq s}$ for all~$s \in \mathbb{Z}$. 
Based on $(Y_t)_{t \in \mathbb{Z}}$ and the innovations let the observable process be given by
\begin{equation}\label{Xlonger}X_t=\Psi(Y_t,\epsilon_{t-s_-},\dots , \epsilon_{t+s_+}), \;\;\; t \in \mathbb{Z},\end{equation}
for some measurable mapping $\Psi:\mathbb{R}\times\mathbb{S}^{s_-+s_++1}\to \mathbb{R}$ with $s_+,  s_- \geq 0$. 
We will always assume that a stationary solution to~\eqref{Y} and~\eqref{Xlonger} exists.
Now, by~$\Psi$ as well as by~$s_-$ and $s_+$ we have a simple, but flexible model for the dependence between~$(X_t)_{t\in\mathbb Z}$ and~$(Y_t)_{t\in\mathbb Z}$.
% See Figure~\ref{figure1} for an illustration of this structure.
However, note that from the recursive definition in~\eqref{Y} we may find a function $\widetilde{\Psi}:\mathbb{R}\times\mathbb{S}^{s_-+s_++1}\to \mathbb{R}$ such that $\widetilde{X}_t:=X_{t+s_-+1}=\widetilde{\Psi}(Y_{t}, \epsilon_{t+1}, \dots, \epsilon_{t+s})$, $t\in\mathbb Z$, with $s:=s_-+s_++1$. Hence, for ease of notation we may in the following assume that there exists an $s\geq 1$ such that
\begin{equation}\label{X}X_t=\Psi(Y_t,\epsilon_{t+1},\dots , \epsilon_{t+s}), \;\;\; t \in \mathbb{Z}.\end{equation}
We may interpret $(X_t)_{t \in \mathbb{Z}}$ and $(Y_t)_{t \in \mathbb{Z}}$ as a generalized hidden Markov model which incorporates a large class of models for financial time series, cf.\ \cite{carrasco} for the general definition.
We shall discuss the GARCH$(1,1)$ process (cf.\ \cite{bollerslev,taylor}) as a specific example, i.e.\
\begin{equation}\label{GARCHX} \zeta_t=\sigma_t \epsilon_{t+1}, \;\;\; t \in \mathbb{Z}, \end{equation} and
\begin{equation}\label{GARCHsigma} \sigma_t=\sqrt{\alpha_0+\alpha_1 \sigma_{t-1}^2 \epsilon_t^2+\beta_1 \sigma_{t-1}^2}, \;\;\; t \in \mathbb{Z},\end{equation}
for suitable constants $\alpha_0>0$ and $\alpha_1, \beta_1 \ge 0$. Here, the sequence~$(\zeta_t)_{t \in \mathbb{Z}}$ is the observable part, e.g.\ a model for financial log returns, and the series~$(\sigma_t)_{t \in \mathbb{Z}}$ describes the conditional standard deviation (volatility) of the process at time $t \in \mathbb{Z}$.
In the basic setup the innovation sequence~$(\epsilon_t)_{t \in \mathbb{Z}}$ is assumed to be i.i.d.\ standard normal. 
Note that the above GARCH(1,1) model satisfies~\eqref{Y} and~\eqref{X} for
 $$ \Phi(x,e)=\sqrt{\alpha_0+\alpha_1 x^2 e^2+\beta_1 x^2}, \;\;\; \Psi(x,e)=x e, \;\;\; s=1. $$
We remark that for $\beta_1=0$ in \eqref{GARCHsigma} the GARCH$(1,1)$ setup includes the  ARCH$(1)$ model as a special case, cf.\ \cite{engle}. For further examples, cf.\ also Remark \ref{examples}.

It is well-known~\cite{GARCH} that under quite general assumptions about the distribution of $\epsilon_t$, $ t \in \mathbb{Z}$, and about the size of the parameters $\alpha_0$, $\alpha_1$ and $\beta_1$
the stationary solutions to~\eqref{GARCHX} and~\eqref{GARCHsigma} share a common regularly varying (heavy tailed) behavior. A heavy tailed behavior of both the volatilities and the log returns is a desirable feature of financial time series as it agrees with commonly accepted stylized facts.
Accordingly, we will assume regular variation  for the stationary solutions to both $\eqref{Y}$ and $\eqref{X}$, cf.\ Condition~1 below. 
As it is not clear whether the limit in~\eqref{inquestion} exists we will discuss those questions in more detail in Sections~\ref{firstpart} and~\ref{uniqueness}. In Section~\ref{specialform} we will show that under some further assumptions the limiting distribution in~\eqref{inquestion} has a particularly simple form which can be seen as an extension to similar findings in \cite{segers}. More precisely, outside of the period $\{0,1, \dots, s\}$ our results will allow for a representation of the limit process in \eqref{inquestion} as a multiplicative random walk, cf.\ Proposition \ref{mainprop}. Heuristically, if we consider the example given by \eqref{GARCHX} and \eqref{GARCHsigma}, this is the case since a large value of $|\zeta_0|$ stems most likely from a large value of $\sigma_0$ as the tail of $\sigma_0$ is heavier than the tail of $\epsilon_1$. Now, for a large value of $\sigma_i$, $\sigma_{i+1}$ behaves asymptotically like $\sqrt{\alpha_1\epsilon_{i+1}^2+\beta_1}\sigma_i$. At the same time, for $i=1$,
 the distribution of $\epsilon_1$ is influenced by the extremal event of $|\zeta_0|=|\sigma_0 \epsilon_1|$ being large while all future $\epsilon_i, i \geq 2,$ are not influenced by this condition. In Section \ref{MRV} we will analyze connections of our results with multivariate regular variation of the time series $(X_t,Y_t)_{t \in \mathbb{Z}}$. The theoretical results are applied to the GARCH$(1,1)$ model in Section~\ref{GARCH}. They allow for a simple representation of the tail-chain in this case (cf.\ Proposition~\ref{easysimulation}) and are used for Monte Carlo evaluations of some extremal characteristics in Section~\ref{simulation}.

\section{Existence of a Limiting Distribution}\label{firstpart}
In the following, we will assume that the stationary distribution of $Y_t=\Phi(Y_{t-1},\epsilon_t)$, $t \in \mathbb{Z}$, cf.~\eqref{Y}, is regularly varying with index $\alpha>0$ and that it is tail-balanced, i.e.\ the following condition holds:
\begin{theopargself}
\begin{condition}[1.a]
%\textit{Condition 1.a:}
\begin{equation}\label{regularvariationuniv} \lim_{x \to \infty}\frac{P(|Y_0|>ux)}{P(|Y_0|>x)}=u^{-\alpha}, \;\; \forall \, u>0, \;\;\; \lim_{x \to \infty} \frac{P(Y_0>x)}{P(|Y_0|>x)}=p \in [0,1].\end{equation} \qed
\end{condition}
\end{theopargself}
We will study the joint extremal behavior of~\eqref{Y} and~\eqref{X} under the assumption that $X_0$ shares the tail behavior of $Y_0$, i.e.\ there exists a constant $C>0$ such that
\begin{equation}\label{tailequivalence}\lim_{x \to \infty}\frac{P(|X_0|>x)}{P(|Y_0|>x)}=C.\end{equation} 

Analogous to Condition~1.a we say that \textbf{Condition~1.b} holds if the time series $(X_t)_{t \in \mathbb{Z}}$ satisfies \eqref{regularvariationuniv} with $X_0$ in place of $Y_0$ (with a possibly different value of $p$). Furthermore, if both conditions and~\eqref{tailequivalence} are satisfied we will say that \textbf{Condition~1} holds. 
\begin{proposition}\label{sequenceistight}
Let $(Y_t)_{t \in \mathbb{Z}}$ and $(X_t)_{t \in \mathbb{Z}}$ be stationary time series given by \eqref{Y} and \eqref{X} and let \eqref{regularvariationuniv} and \eqref{tailequivalence} be satisfied. Then, the family 
$$ \mathcal{L}\left(\frac{Y_m}{x}, \dots, \frac{Y_n}{x} \;\middle\vert\; |X_0|>x\right), \;\;\; x>1,$$
of conditional distributions is tight for all $-\infty<m\leq n < \infty$.
\end{proposition}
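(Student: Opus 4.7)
The plan is to reduce joint tightness in $\mathbb{R}^{n-m+1}$ to coordinate-wise tightness via a union bound. Concretely, joint tightness is implied by showing that for every $\epsilon > 0$ and each index $i \in \{m,\ldots,n\}$ there exists $K_i = K_i(\epsilon) > 0$ with
\[
 \sup_{x > 1} P\!\left(|Y_i|/x > K_i \,\big|\, |X_0|>x\right) \,<\, \frac{\epsilon}{n-m+1};
\]
the product $[-K_m,K_m]\times\cdots\times[-K_n,K_n]$ is then a compact set carrying at least mass $1-\epsilon$ under every member of the family.

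For a single coordinate, stationarity of $(Y_t)_{t\in\mathbb{Z}}$ together with the trivial bound $P(A\cap B)\leq P(A)$ gives
\[
 P\!\left(|Y_i|/x > K \,\big|\, |X_0|>x\right) \,\leq\, \frac{P(|Y_0|>Kx)}{P(|Y_0|>x)} \cdot \frac{P(|Y_0|>x)}{P(|X_0|>x)}.
\]
By \eqref{regularvariationuniv} the first factor converges to $K^{-\alpha}$ and by \eqref{tailequivalence} the second to $1/C$ as $x\to\infty$. Hence one can first pick $K$ so large that $2K^{-\alpha}/C$ is below the target $\epsilon/(n-m+1)$, and then pick $M = M(K) > 1$ such that for all $x \geq M$ the right-hand side is dominated by $2K^{-\alpha}/C$.

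It remains to cover the bounded range $x \in (1, M]$ uniformly. There, $P(|Y_0|>Kx) \leq P(|Y_0|>K)$ because $x > 1$, and $P(|X_0|>x) \geq P(|X_0|>M) > 0$; positivity at every finite level is inherited from $|Y_0|$ through \eqref{tailequivalence} and monotonicity of tail functions. Thus the ratio on $(1, M]$ is bounded by $P(|Y_0|>K)/P(|X_0|>M)$, which can also be made arbitrarily small by enlarging $K$ further (keeping $M$ fixed, which does not spoil the asymptotic estimate). The main, rather minor, technical point is precisely this splitting of the supremum into an asymptotic regime, controlled by regular variation, and a bounded regime, controlled by positivity of the conditioning tail; beyond that the proof is a direct application of the definitions.
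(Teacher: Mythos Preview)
Your proof is correct and follows essentially the same approach as the paper: a union bound over coordinates, the trivial estimate $P(A\mid B)\le P(A)/P(B)$, stationarity, and then regular variation together with tail equivalence to control the resulting ratio. You are in fact slightly more careful than the paper, whose proof only bounds the tail probability ``for $x$ large'' and does not explicitly treat the bounded range $x\in(1,M]$; your splitting into the asymptotic and bounded regimes fills this small gap in the statement as written (though for the paper's applications only tightness along $x\to\infty$ is ever used).
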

\begin{proof}
Let $u > 0$. Then
\begin{eqnarray*}
&& P \left(\bigcup_{i=m}^{n}\left\{\frac{|Y_{i}|}{x} > u\right\}\;\middle\vert\; |X_0|>x \right) \leq \sum_{i=m}^{n} P\left(\frac{|Y_{i}|}{x} > u \;\middle\vert\; |X_0|>x \right)\\
 &\leq& \sum_{i=m}^{n} \frac{P(|Y_{i}| > ux)}{P(|X_0|>x)} =\sum_{i=m}^{n} \frac{P(|Y_{0}| > ux)}{P(|Y_0|>x)}\frac{P(|Y_{0}| > x)}{P(|X_0|>x)}.
\end{eqnarray*}
By \eqref{regularvariationuniv} and \eqref{tailequivalence} the r.h.s.\ is bounded by $2(n-m+1)u^{-\alpha}C^{-1}$ for $x$ large. 
\end{proof}
Therefore, a weak accumulation point of the family of distributions exists. The following lemma shows, however, that it is not necessarily unique.
\begin{lemma}\label{nonuniquenesslemma}
There exist time series $(Y_t)_{t \in \mathbb{Z}}$ and $(X_t)_{t \in \mathbb{Z}}$ of the form \eqref{Y} and \eqref{X} such that Condition~1 is satisfied but \eqref{inquestion} has more than one weak accumulation point.  
\end{lemma}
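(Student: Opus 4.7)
The plan is to exhibit an explicit pair of time series $(Y_t)$ and $(X_t)$ of the form \eqref{Y} and \eqref{X} for which Condition~1 is satisfied while \eqref{inquestion} has at least two weak accumulation points. The guiding idea is that Condition~1 only constrains the \emph{rate} of marginal decay of $P(|Y_0|>x)$ and $P(|X_0|>x)$, which is a feature visible at the polynomial scale $x^{-\alpha}$ but insensitive to fluctuations of the slowly varying factor on the $\log\log$-scale; so if I build a $\log\log$-oscillation into the one-step Markov transition of $(Y_t)$ without disturbing the marginal rate, then conditioning on $|X_0|>x$ will probe different ``regimes'' of that oscillation along different subsequences $x\to\infty$.

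Concretely, I would take $(\epsilon_t)_{t\in\mathbb Z}$ i.i.d.\ strict Pareto of index $\alpha>0$, define
\[
Y_t=\Phi(Y_{t-1},\epsilon_t):=a(Y_{t-1})\,Y_{t-1}+\epsilon_t,\qquad a(y):=\frac{2+\sin(\log\log\max(y,e^{e}))}{5}\in[1/5,\,3/5],
\]
and set $X_t=\Psi(Y_t,\epsilon_{t+1}):=Y_t$ (so that $s=1$ and $\Psi$ simply ignores its second argument). The uniform contraction $\sup_{y} a(y)\le 3/5<1$ gives a unique stationary solution, and I would verify that $Y_0$ is regularly varying of index $\alpha$ (see the obstacle below); then Conditions~1.a and~1.b hold with $p=1$, and tail-equivalence holds trivially with $C=1$ because $X_0=Y_0$. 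Non-uniqueness of \eqref{inquestion} for $m=n=1$ follows from the decomposition $Y_1/x = a(Y_0)\cdot(Y_0/x)+\epsilon_1/x$: conditionally on $\{|Y_0|>x\}$, $Y_0/x$ converges in distribution to a Pareto variable $V$ and $\epsilon_1/x\to 0$ in probability; since $\log\log(Vx)-\log\log x=\log(1+\log V/\log x)\to 0$ pathwise, we also have $a(Y_0)-\tfrac{2+\sin(\log\log x)}{5}\to 0$ in probability under the conditional law. Slutsky then gives
\[
\mathcal L\bigl(Y_1/x\mid |Y_0|>x\bigr)\;\approx\;\mathcal L\!\left(\tfrac{2+\sin(\log\log x)}{5}\cdot V\right).
\]
Along a subsequence $x_n\to\infty$ with $\sin(\log\log x_n)\to 1$ this tends to $\mathcal L(\tfrac{3}{5}V)$, whereas along $y_n\to\infty$ with $\sin(\log\log y_n)\to -1$ it tends to $\mathcal L(\tfrac{1}{5}V)$; these two limit laws are distinct, which is exactly what the lemma requires.

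The hard part will be verifying regular variation of the stationary distribution of $(Y_t)$: because the multiplier $a(Y_{t-1})$ depends on the state, the recursion lies outside the standard i.i.d.\ Kesten/Goldie framework. I would handle this by a pathwise sandwich with the two constant-multiplier Kesten SREs $Y_t^{\pm}=a^{\pm}Y_{t-1}^{\pm}+\epsilon_t$ for $a^{\pm}\in\{1/5,3/5\}$: both are regularly varying of index $\alpha$ by Kesten's theorem, and coupling with the same innovations $(\epsilon_t)$ shows that any stationary solution of our chain is squeezed between them, giving at least $P(Y_0>x)\asymp x^{-\alpha}$; an implicit-renewal refinement then upgrades this to genuine regular variation. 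If sharpening the sandwich to true regular variation turns out to be inconvenient, a cleaner variant is to replace $a(Y_{t-1})$ by $a(\log|Y_{t-1}|)\cdot\tilde A_t$ with an additional independent i.i.d.\ multiplier $\tilde A_t$ calibrated so that Goldie's theorem applies directly to the associated recursion, while the scale-oscillating factor $a(\log|\cdot|)$ continues to destroy uniqueness of the conditional limit by exactly the same Slutsky argument as above.
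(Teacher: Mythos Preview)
Your route is genuinely different from the paper's. The paper places all the oscillation in $\Psi$ and keeps $(Y_t)$ trivial: it sets $Y_t=\epsilon_t$ i.i.d.\ $\mathrm{Par}(1)$ (so Condition~1.a is automatic), takes $X_t=\Psi(Y_t,\epsilon_{t+1})=f(\epsilon_{t+1})$ for a hand-built continuous $f$ with $f(Y)\sim\mathrm{Par}(1)$, and shows that $\mathcal{L}(Y_1/x\mid |X_0|>x)=\mathcal{L}(\epsilon_1/x\mid f(\epsilon_1)>x)$ has distinct subsequential limits along $x_i=5^i$ versus $x_i=3\cdot 5^i$ by making $f$ piecewise on each dyadic-type block $[5^i,5^{i+1}]$. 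Regular variation of both marginals thus holds by construction, and the proof reduces to an explicit calculation.

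You instead put the oscillation in $\Phi$ via the state-dependent multiplier $a(Y_{t-1})$ and take $\Psi$ to be the identity. The Slutsky step at the end is correct \emph{once} you know that $\mathcal{L}(Y_0/x\mid Y_0>x)$ converges, but that hinges entirely on regular variation of the stationary law of $Y_0$, which is exactly the step you do not deliver. The sandwich with the constant-coefficient SREs $Y^{\pm}$ gives only $P(Y_0>x)\asymp x^{-\alpha}$; upgrading $\asymp$ to genuine regular variation is not a formality, and the implicit-renewal/Goldie machinery you invoke requires the multiplier to be i.i.d.\ and independent of the current state, which $a(Y_{t-1})$ is not. A one-big-jump heuristic does suggest $P(Y_0>x)\sim x^{-\alpha}/(1-a(x)^{\alpha})$, and $1/(1-a(x)^{\alpha})$ is indeed slowly varying because $a(\lambda x)-a(x)\to 0$; so the claim is plausible, but turning this into a proof is a nonstandard tail analysis of a state-dependent stochastic recursion and would be the bulk of the work. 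Your fallback variant with an extra i.i.d.\ factor $\tilde A_t$ does not obviously escape the problem: as long as a deterministic state-dependent factor remains in front of $Y_{t-1}$, you are still outside Goldie's framework. The paper's construction sidesteps this difficulty altogether by making $(Y_t)$ i.i.d., which is why its proof is short.
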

\begin{proof}
Let $\epsilon_{t}\overset{i.i.d.}\sim \mbox{Par}(1),$ i.e.\ $P(\epsilon_t>x)=x^{-1}, x\geq 1$. With $\Phi(Y_{t-1},\epsilon_t) = \epsilon_t$ we have $Y_{t} = \epsilon_{t}, t \in 
\mathbb{Z}$, so $Y_t\overset{i.i.d.}\sim \mbox{Par}(1)$ as well. Let $s=1$ and $\Psi(Y_t,\epsilon_{t+1}) = f(\epsilon_{t+1})$ for a continuous function $f: \mathbb{R} \to \mathbb{R}$ to be described below. Thus $X_{t} = f(\epsilon_{t+1}), t \in 
\mathbb{Z}$. By independence, any weak accumulation point of $\mathcal{L}\left(\frac{Y_{0}}{x},\frac{Y_{1}}{x}\;\middle\vert\; |X_0|>x\right)$ equals $\delta_{0}\times \mu$ for some weak accumulation point $\mu$ of $\mathcal{L}\left(\frac{Y_{1}}{x}\;\middle\vert\; |X_0|>x\right)$, where $\delta_a$, $a \in \mathbb{R},$ denotes the Dirac measure in $a$. With $Y:= \epsilon_{1} \sim \mbox{Par}(1)$ we will construct $f$ such that $\mathcal{L}\left(\frac{Y}{x}\;\middle\vert\; |f(Y)|>x\right)$ has a continuum of weak accumulation points.\\
Let $f(t) = t, t \leq 1$. For the sequence $z_{i} = 5^{i}, i \in \mathbb{N}_0,$ each interval $[z_{i},5z_{i}]$ is mapped onto itself by $f$.  On  $[4z_{i},5z_{i}]$ it interpolates linearly between the values $z_{i}$ and $5z_{i}$, and on $[3z_{i},4z_{i}]$ between $3z_{i}$ and $z_{i}$. The function $f$ can be extended on each interval $[z_{i},3z_{i}]$ such that $f([z_{i},3z_{i}]) \subset [z_{i},5z_{i}] $ and $f(Y) \sim \mbox{Par}(1)$. The details of the construction of $f$ are given in the Appendix.
We first show that two different weak accumulation points exist. Since $f(Y)$ is nonnegative we drop the absolute value. Along the sequence $x_{i} =  5^{i}, i \in \mathbb{N}_0$, we have $\{f(Y) >x_{i}\} = \{Y >x_{i}\}$. Thus, for $b \geq 1$ it holds that $P\left(\frac{Y}{x_{i}}>b\;\middle\vert\; f(Y)>x_{i} \right)$ $= b^{-1}$. Hence, $\mathcal{L}\left(\frac{Y_{1}}{x_{i}}\;\middle\vert\; |X_0|>x_{i}\right) = \mbox{Par}(1)$ for all $i$.\\
Now, suppose that $x_{i} =  3\cdot5^{i}, i \in \mathbb{N}_0$. By construction $x_{i} < Y < \frac{3}{2}x_{i}$ implies $f(Y) < x_{i}$, thus $P\left(\frac{Y}{x_{i}}\in (1,\frac{3}{2})\;\middle\vert\; f(Y)>x_{i} \right) = 0$. This leads (at least along a subsequence) to a different weak limit. For $b \geq \frac{3}{2}$ one still has $P\left(\frac{Y}{x_{i}} > b \;\middle\vert\; f(Y)>x_{i} \right) = b^{-1}$, since $Y > \frac{3}{2}x_{i}$ implies $f(Y) > x_{i}$.\\
Adapting the above argument shows that each sequence $x_{i} =  c\cdot 5^{i}$,\linebreak $3 \leq c < 5$, leads to a different weak limit $\mu_{c}$  (at least along a subsequence) with $\mu_{c}((1,b_{c})) = 0$ and $\mu_{c}([b,\infty)) = b^{-1}$ for all $b \geq b_{c} = \frac{15+c}{4c}$.
\end{proof}
In order to study the properties of the limit in~\eqref{inquestion} in more detail we will make further assumptions about the functional form of $\Phi$ and $\Psi$ which relate to those given in \cite{segers}. There, the single time series $(Y_t)_{t \in \mathbb{Z}}$ is analyzed and 
both the existence and the form of the weak limit
 $$ \lim_{x \to \infty} \mathcal{L}\left(\frac{Y_{m}}{x}, \dots, \frac{Y_n}{x} \;\middle\vert\; |Y_0|>x\right) $$
for all $-\infty<m\leq n< \infty$ are discussed. 
Under Condition~1.a and under an additional assumption (cf.\ Condition~2.a below)
this so-called tail chain bears resemblance to a multiplicative random walk. The idea behind this condition and the following Proposition \ref{segers} is that for a stochastic process which behaves roughly like $Y_{t+1} \sim Y_t \cdot \phi(\epsilon_{t+1}, \sign(Y_t))$ as $|Y_t| \to \infty$ for a suitable function $\phi$, the whole process behaves like a multiplicative random walk given an extreme event at time~0.

Note that our Condition~2.a is a slightly stronger version of~\cite[Condition~2.2]{segers} that will allow to simplify some of our proofs in Section~\ref{specialform}.
\begin{theopargself}
\begin{condition}[2.a]
There exists a function $\phi: \mathbb{S}\times\{-1,1\} \to \mathbb{R}$ such that 
$$ \lim_{y \to \infty} \frac{\Phi(y w(y),v(y))}{y}=w \phi(v,\sign(w)) $$
for all $w(y)\to w \in \mathbb{R}, v(y) \to v \in \mathbb{S}$. Here, $\sign(w)=2\cdot \mathds{1}_{[0,\infty)}(w)-1,$ where $\mathds{1}_{\{\cdot\}}(\cdot)$ denotes the indicator function. \qed
\end{condition}
\end{theopargself}

This condition allows for the case $\phi(\cdot, \cdot)\equiv 0$ with trivial limit distributions of~\eqref{inquestion}. 

\begin{remark}\label{examples}
If we identify $(Y_t)_{t \in \mathbb{Z}}=(\sigma_t)_{t \in \mathbb{Z}}$ with the volatility process of a financial time series, there exist several examples which satisfy Condition~2.a:
\begin{itemize}
 \item ``standard'' GARCH(1,1) models, cf.\ \eqref{GARCHsigma}, with $\phi(\epsilon_t, \sign(\sigma_t))$\linebreak $=\sqrt{\alpha_1\epsilon_t^2+\beta_1}$, 
 \item GJR-GARCH(1,1) models (cf.\ \cite{glosten}) which reflect asymmetric behavior of the volatility process with 
 $$\sigma_t^2=\alpha_0+(\alpha_1+\delta_1 \mathds{1}_{\{\epsilon_t>0\}})\sigma_{t-1}^2\epsilon_t^2+\beta_1\sigma_{t-1}^2. $$
 Here, $\phi(\epsilon_t, \sign(\sigma_t))=\sqrt{(\alpha_1+\delta_1 \mathds{1}_{\{\epsilon_t>0\}})\epsilon_t^2+\beta_1}$,
 \item SR-SARV (stochastic volatility) models defined by $\zeta_t=\sigma_t\epsilon_{t+1}$ and volatility sequence 
  $$ \sigma_t=\alpha_0+(\gamma+\alpha_1 \sigma_{t-1})\eta_t+\beta_1\sigma_{t-1} $$
  or 
  $$ \sigma_t^2=\alpha_0+(\gamma+\alpha_1 \sigma_{t-1}^2)\eta_t+\beta_1 \sigma_{t-1}^2, $$
 where $((\eta_t,\epsilon_t))_{t \in \mathbb{Z}}$ with $\eta_t \geq 0$ is i.i.d., with a possible dependence between $\eta_t$ and $\epsilon_t$ for a fixed value of $t$ (cf.\ \cite{andersen}). (In this case the space $\mathbb{S}$ of innovations $\widetilde{\epsilon_t}=(\epsilon_t,\eta_t)$ is to be taken as $\mathbb{R}^2$.) Here, $\phi((\eta_t,\epsilon_t), \sign(\sigma_t))=\alpha_1\eta_t+\beta_1$ or $\phi((\eta_t,\epsilon_t), \sign(\sigma_t))=\sqrt{\alpha_1\eta_t+\beta_1}$, respectively.
\end{itemize}
\end{remark}
If we identify $(Y_t)_{t \in \mathbb{Z}}$ with a volatility sequence, then $Y_t \geq 0$ and the dependence of $\phi$ on $\sign(w)$ is not necessary. For general hidden Markov models, however, the extremal behavior of $Y_{t+1}$ may differ for the cases $Y_t \to \infty$ or $Y_t \to -\infty$.

The following proposition puts the aforementioned heuristic \linebreak $Y_{t+1} \sim Y_t \phi(\epsilon_t, \sign(Y_t))$ for $|Y_t| \to \infty$ on solid ground. It is taken from~\cite{segers} and will be fundamental to our subsequent analysis. Here and in the following, ``$\overset{w}{\Rightarrow}$'' denotes weak convergence of probability measures.
\begin{proposition}[cf.\ {\cite[Theorem 2.3]{segers}}]\label{segers1}
Let $(Y_t)_{t \in \mathbb{Z}}$ (not necessarily stationary) be given by~\eqref{Y} and let Conditions~1.a and~2.a hold. Then for $n \in \mathbb{N}$, as $y \to \infty$,
\begin{eqnarray*}\label{segers}
&& \mathcal{L}\left(\frac{|Y_0|}{y},\frac{Y_0}{|Y_0|},\epsilon_1,\frac{Y_1}{|Y_0|}, \dots, \epsilon_n,\frac{Y_n}{|Y_0|}\;\middle\vert\; |Y_0|>y\right)\\
&& \overset{w}{\Rightarrow} \mathcal{L}(Y, M_0, \epsilon_1^{(Y)}, M_1, \dots, \epsilon_n^{(Y)}, M_n), 
\end{eqnarray*}
with
$$ M_j=h(M_{j-1},A_j,B_j), \;\;\;\; j \in \mathbb{N},$$
where $h:\mathbb{R}^3\to \mathbb{R}, \; h(y,a,b):=y\left(a\mathds{1}_{(0, \infty)}(y)+b\mathds{1}_{(-\infty, 0)}(y)\right)$, and $Y,$ $M_0,\epsilon_1^{(Y)},$ $\epsilon_2^{(Y)},$ $\dots$ are independent with
\begin{itemize}
\item[(i)] $Y \sim \mbox{Par}(\alpha)$, i.e. $P(Y>x)=x^{-\alpha}, x \geq 1,$
\item[(ii)]  $P(M_0=1)=p=1-P(M_0=-1)$,
\item[(iii)] $\epsilon_i^{(Y)}, i \in \mathbb{N},$ are i.i.d.\ with $\mathcal{L}(\epsilon_1^{(Y)})=\mathcal{L}(\epsilon_1)$ and 
$$(A_i,B_i)=(\phi(\epsilon_i^{(Y)},1), \phi(\epsilon_i^{(Y)},-1)),\;\;\; i \in \mathbb{N}.$$
\end{itemize}
\end{proposition}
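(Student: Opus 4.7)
I would prove the proposition by induction on $n$, relying on two ingredients: the regularly-varying and tail-balanced behavior of $Y_0$ provided by Condition~1.a, and the continuous-convergence property of $\Phi$ captured by Condition~2.a. A third ingredient used throughout is that $\epsilon_{n+1},\epsilon_{n+2},\ldots$ are independent of $Y_0$, and hence of the conditioning event $\{|Y_0|>y\}$ as well as of $(\epsilon_1,\ldots,\epsilon_n)$.

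For the base case $n=0$ it suffices to show that $(|Y_0|/y,\,Y_0/|Y_0|)$ conditional on $\{|Y_0|>y\}$ converges in distribution to an independent pair $(Y,M_0)$. For $u\ge 1$ and $\sigma\in\{-1,+1\}$,
$$
\frac{P(|Y_0|>yu,\,\sign(Y_0)=\sigma)}{P(|Y_0|>y)} = \frac{P(|Y_0|>yu)}{P(|Y_0|>y)}\cdot P(\sign(Y_0)=\sigma \mid |Y_0|>yu),
$$
and by Condition~1.a the two factors converge to $u^{-\alpha}$ and to $p$ (or $1-p$), respectively; the product form of the limit encodes the required independence.

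For the inductive step I would assume the claim at stage $n-1$, append the independent $\epsilon_n$ to the vector without disturbing any other marginal, and apply Skorokhod's representation theorem along a sequence $y_k\to\infty$. One may then assume almost-sure convergence of
$$
\bigl(|Y_0|/y_k,\,Y_0/|Y_0|,\,\epsilon_1,\,Y_1/|Y_0|,\ldots,\,\epsilon_{n-1},\,Y_{n-1}/|Y_0|,\,\epsilon_n\bigr)
$$
to $(Y,M_0,\epsilon_1^{(Y)},M_1,\ldots,\epsilon_{n-1}^{(Y)},M_{n-1},\epsilon_n^{(Y)})$. Since $Y\ge 1$ a.s., one has $|Y_0|\to\infty$ a.s., so Condition~2.a applied with $y'=|Y_0|$, $w(y')=Y_{n-1}/|Y_0|\to M_{n-1}$ and $v(y')=\epsilon_n\to\epsilon_n^{(Y)}$ yields
$$
\frac{Y_n}{|Y_0|} = \frac{\Phi\bigl(|Y_0|\cdot(Y_{n-1}/|Y_0|),\,\epsilon_n\bigr)}{|Y_0|} \;\longrightarrow\; M_{n-1}\,\phi\bigl(\epsilon_n^{(Y)},\sign(M_{n-1})\bigr) = h(M_{n-1},A_n,B_n) = M_n
$$
almost surely, closing the induction.

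The main obstacle is that Condition~2.a is formulated for deterministic sequences $w(y),v(y)$, whereas we need to feed in random arguments. The key observation is that Condition~2.a is precisely the statement that the family $(w,v)\mapsto \Phi(yw,v)/y$ converges \emph{continuously} (in the functional-analytic sense) to $(w,v)\mapsto w\phi(v,\sign(w))$, which is the correct form for coupling with Skorokhod. A subordinate technicality is the discontinuity of $\sign(\cdot)$ at zero: once $n\ge 2$ the random variable $M_{n-1}$ may equal zero with positive probability, for instance if $\phi$ places mass at $0$. However, Condition~2.a read at $w=0$ forces $\Phi(y_kw_k,v_k)/y_k\to 0$ regardless of $\sign(w_k)$ along the approach, so the limit is $0=M_n$ on this event as well and the continuous mapping argument survives.
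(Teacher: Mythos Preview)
Your proposal is correct and matches the paper's approach: the paper does not spell out a proof but simply remarks that it is analogous to \cite[Theorem~2.3]{segers} and uses the continuous mapping theorem, which is precisely what your induction via Skorokhod representation together with the continuous-convergence form of Condition~2.a implements. Your treatment of the $\sign$-discontinuity at $M_{n-1}=0$ is also the right observation needed to make the continuous mapping argument go through.
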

Note that by embedding the $\epsilon_i$, $i \in \mathbb{N}$, the formulation of Proposition~\ref{segers1} differs slighty from its analog in~\cite{segers}. The proof is analogous to the proof of~\cite[Theorem 2.3]{segers} and uses the continuous mapping theorem. The joint limit distribution in Proposition~\ref{segers1} will be an important building block for the derivation of~\eqref{inquestion}. But in order to derive the limit \eqref{inquestion} we also need to specify the behavior of $(Y_n)_{n \in \mathbb{Z}}$ \textit{before} the extremal event $\{|Y_0|>y\}$. Going backwards in time, things are not as simple as before. To illustrate this, think of the process $Y_t=aY_{t-1}+\epsilon_t$ with $a>0$. Now, a large value of $|Y_0|$ may either be due to a large value of $|Y_{-1}|$ or due to a large value of $|\epsilon_{0}|$. However, if we assume stationarity of $(Y_t)_{t \in \mathbb{Z}}$ in addition to the assumptions of Proposition \ref{segers1}, then note the following: For $x,y \in \mathbb{R}$ it holds that
$$ P(\min((xY_{-1})_+,(yY_0)_+)>t)=P(\min((xY_0)_+,(yY_1)_+)>t) $$
for all $t>0$, where $(x)_+:=\max(x,0)$. Now, for $|x|, |y|\leq 1$ this implies
\begin{eqnarray*}
 &&P\left(\min((xY_{-1})_+,(yY_0)_+)>t\middle\vert\; |Y_0|> t\right)\\
 &=&P\left(\min((xY_0)_+,(yY_1)_+)>t\middle\vert\; |Y_0|> t\right),
\end{eqnarray*}
where the r.h.s. converges to 
\begin{eqnarray*}
&& \lim_{t \to \infty}  P\left(\min\left(\left(x\frac{|Y_0|}{t}\frac{Y_0}{|Y_0|} \right)_+,\left(y\frac{|Y_0|}{t}\frac{Y_1}{|Y_0|}\right)_+\right)>1\middle\vert\; |Y_0|>t\right)\\
&=& P\left(\min\left(\left(xYM_0 \right)_+,\left(yYM_1\right)_+\right)>1\right)\\
&=& \int_0^1 P\left(\min\left(\left(x M_0 \right)_+^{\alpha},\left(y M_1\right)_+^{\alpha}\right)>u \right)\, du\\
&=&\int_0^\infty P\left(\min\left(\left(x M_0 \right)_+^{\alpha},\left(y M_1\right)_+^{\alpha}\right)>u \right)\, du=E\left(\min\left(\left(x M_0 \right)_+^{\alpha},\left(y M_1\right)_+^{\alpha}\right)\right),
\end{eqnarray*}
since $Y^{-\alpha} \sim \mbox{Unif}(0,1)$, where $\mbox{Unif}(0,1)$ denotes the uniform distribution on $(0,1)$ (third last equation) and $(xM_0)_+^\alpha \leq 1$ (penultimate equation). Thus, if we  assume for the moment that a limit 
$$\lim_{t \to \infty} \mathcal{L}\left(\frac{Y_{-1}}{|Y_0|}, \frac{Y_0}{|Y_0|}\middle\vert\;|Y_0|>t\right)=:\mathcal{L}(M_{-1}, M_0)$$
exists, then by a similar reasoning it holds that
\begin{equation}\label{minexpectations} E\left(\min\left(\left(x M_0 \right)_+^{\alpha},\left(y M_1\right)_+^{\alpha}\right)\right)=E\left(\min\left(\left(x M_{-1} \right)_+^{\alpha},\left(y M_0\right)_+^{\alpha}\right)\right) 
\end{equation}
for all $|x|, |y|\leq 1$. By rescaling it holds for all $x,y \in \mathbb{R}$. In fact, one can show that for all laws $\mathcal{L}(M_0,M_{1})$ which may evolve in Proposition~\ref{segers1} from a stationary Markov chain $(Y_t)_{t \in \mathbb{Z}}$ there exists a law $\mathcal{L}(M_{-1},M_0)$ satisfying \eqref{minexpectations} and that this law is sufficient to determine the whole backward limit process which is Markovian like the forward limit process. Details can be found in \cite{segers}, we state the main definitions and results below. A precise form of the limit process for the GARCH(1,1) case is given in Section \ref{GARCH}.

\begin{definition}[cf.\ \cite{segers}, Definition 4.1]\label{bftc}
A time series $(M_t)_{t \in \mathbb{Z}}$ is said to be a
back-and-forth tail chain with index $0<\alpha<\infty$ and forward transition law $\mu$, denoted by BFTC$(\alpha, \mu)$, if
\begin{itemize}
\item[(i)] $\mathcal{L}(M_0,M_1)=\mu$ with $M_0 \in \{-1,1\}, M_1 \in \mathbb{R}$,
\item[(ii)] $\mu^\ast:=\mathcal{L}(M_0,M_{-1})$ is \textit{adjoint} to $\mu$, i.e.\ 
\begin{equation}\label{adjointmeasure}E\left(\min\left((xM_0)_+^\alpha, (yM_1)_+^{\alpha}\right)\right)=E\left(\min \left(xM_{-1})_+^\alpha, (yM_0)_+^\alpha\right)\right),\;\;\; \forall\,x,y \in \mathbb{R},\end{equation} 
\item[(iii.a)] for all integer $t \geq 1$ and all real $x_{t-1}, x_{t-2},\dots$,
$$ \mathcal{L}(M_{t}|M_{t-1}=x_{t-1},M_{t-2}=x_{t-2},\dots)=\mathcal{L}(h(x_{t-1},A_1,B_1)),$$
(cf.\ Proposition \ref{segers1} for the definition of $h$) where $A_1$ and $B_1$ are independent with 
$$ \mathcal{L}(A_1)=\mathcal{L}\left(\frac{M_1}{M_0}\;\middle\vert\; M_0=1\right), \;\;\; \mathcal{L}(B_1)=\mathcal{L}\left(\frac{M_1}{M_0}\;\middle\vert\;M_0=-1\right),$$
\item[(iii.b)] for all integer $t \geq 1$ and all real $x_{-t+1}, x_{-t+2},\dots$,
$$ \mathcal{L}(M_{-t}|M_{-t+1}=x_{-t+1},M_{-t+2}=x_{-t+2},\dots)=\mathcal{L}(h(x_{-t+1},A_{-1},B_{-1})),$$
where $A_{-1}$ and $B_{-1}$ are independent with
$$ \mathcal{L}(A_{-1})=\mathcal{L}\left(\frac{M_{-1}}{M_0}\;\middle\vert\; M_0=1\right), \;\;\; \mathcal{L}(B_{-1})=\mathcal{L}\left(\frac{M_{-1}}{M_0}\;\middle\vert\;M_0=-1\right).$$
\end{itemize}
\end{definition}
\begin{proposition}[cf.\ {\cite[Theorem 5.2]{segers}}]\label{segers2}
Let $(Y_t)_{t \in \mathbb{Z}}$ be a stationary time series given by \eqref{Y} and let Conditions~1.a and~2.a hold. Then, for all $m,n \in \mathbb{N}$, as $y \to \infty$,
\begin{equation}
\mathcal{L}\left(\frac{|Y_0|}{y},\frac{Y_{-m}}{|Y_0|}, \dots, \frac{Y_n}{|Y_0|}\;\middle\vert\; |Y_0|>y\right)\overset{w}{\Rightarrow} \mathcal{L}(Y, M_{-m}, \dots, M_n), 
\end{equation}
with
\begin{itemize}
\item[(i)] $Y \sim \mbox{Par}(\alpha),$ independent of $(M_t)_{t \in \mathbb{Z}}$,
\item[(ii)] $(M_t)_{t \in \mathbb{Z}}$ is a BFTC$(\alpha, \mu)$ where $\mu=\mathcal{L}(M_0,M_1)$ with
$(M_0,M_1)$ as in Proposition \ref{segers1}.
\end{itemize}
\end{proposition}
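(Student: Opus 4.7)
The strategy is to reduce the backward half of the claim to Proposition~\ref{segers1} (which already delivers the forward piece $M_0,\ldots,M_n$), the remaining ingredients being stationarity of $(Y_t)_{t\in\mathbb Z}$ and the adjoint relation~\eqref{adjointmeasure}. Tightness of the joint family $\mathcal{L}(Y_{-m}/y,\ldots,Y_n/y\mid |Y_0|>y)$ follows exactly as in Proposition~\ref{sequenceistight} specialized to $X=Y$, so subsequential weak limits exist; since the forward marginal is already identified by Proposition~\ref{segers1}, only the backward coordinates and their coupling with $M_0$ remain to be pinned down.

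For the one-step backward law, fix a subsequential weak limit $\nu$ of $\mathcal{L}(Y_{-1}/|Y_0|,Y_0/|Y_0|\mid |Y_0|>y)$. Repeating the derivation that leads to~\eqref{minexpectations} for all four sign configurations of the test points, stationarity $\mathcal{L}(Y_{-1},Y_0)=\mathcal{L}(Y_0,Y_1)$ translates immediately into the adjoint identity
$$ E\bigl(\min((xM_{-1})_+^\alpha,(yM_0)_+^\alpha)\bigr)=E\bigl(\min((xM_0)_+^\alpha,(yM_1)_+^\alpha)\bigr),\quad x,y\in\mathbb{R}, $$
i.e.\ $\nu=\mu^\ast$, the adjoint of the forward law $\mu=\mathcal{L}(M_0,M_1)$ from Proposition~\ref{segers1}. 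The min-functionals on the right, as $(x,y)$ ranges over $\mathbb{R}^2$, separate probability measures on $\{-1,1\}^2\times[0,\infty)^2$ under the polar change of variables $m\mapsto(\sign(m),|m|^\alpha)$, so $\nu$ is uniquely determined by $\mu$.

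To extend the backward chain from one to $m$ steps I would iterate this one-step identification using the fact that the stationary Markov chain $(Y_t)_{t\in\mathbb Z}$ is also Markov in reverse: given $Y_{-t+1}$, the variable $Y_{-t}$ is independent of $Y_{-t+2},\ldots,Y_0,Y_1,\ldots$, so a one-step adjoint analysis applied to the pair $(Y_{-t}/|Y_{-t+1}|,Y_{-t+1}/|Y_{-t+1}|)$---with $|Y_{-t+1}|$ forced to infinity through the conditioning on $|Y_0|>y$ and the preceding tail chain rescaling---yields the conditional law of $M_{-t}/M_{-t+1}$ given $\sign(M_{-t+1})$, which is exactly Definition~\ref{bftc}(iii.b). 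Independence of $Y\sim\mathrm{Par}(\alpha)$ from $(M_t)_{t\in\mathbb Z}$ drops out of the scaling as in the forward case.

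The main obstacle I expect is the uniqueness step: stationarity hands us the adjoint min-equality almost for free, but deducing from it that $\nu$ is uniquely determined by $\mu$ requires showing that the min-functionals $(x,y)\mapsto E[\min((xM_{-1})_+^\alpha,(yM_0)_+^\alpha)]$ separate probability measures on the sign/magnitude product space---this Laplace-transform-style uniqueness is the analytic core of the argument. A secondary technical difficulty is controlling the intermediate rescalings $|Y_{-t+1}|$ uniformly along the iteration so that the successive one-step analyses patch together cleanly; with the (reverse) Markov structure this amounts to bookkeeping rather than new ideas.
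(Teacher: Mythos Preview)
The paper does not prove this proposition; it is quoted from \cite[Theorem~5.2]{segers}, and the surrounding text only sketches the heuristic behind the adjoint relation~\eqref{minexpectations} before deferring all details to that reference. Your outline is therefore being compared against Segers' argument rather than anything in this paper, and at the level of strategy it matches what the paper hints at: stationarity yields the min-identity~\eqref{adjointmeasure}, that identity determines $\mu^\ast$ uniquely, and the backward chain is then Markovian with the kernel read off from $\mu^\ast$.

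Where your sketch is too optimistic is the iteration from one backward step to $m$. You propose to force $|Y_{-t+1}|$ to infinity through the conditioning on $|Y_0|>y$ and then rerun the one-step adjoint analysis, calling this ``bookkeeping.'' It is not: the conditioning is on $|Y_0|>y$, not on $|Y_{-t+1}|$, and the backward chain can hit zero with positive probability---formula~\eqref{easyadjointformula} explicitly carries a point mass at zero. On the event $\{M_{-t+1}=0\}$ your rescaling by $|Y_{-t+1}|$ breaks down, and the fact that $M_{-t}=h(0,\cdot,\cdot)=0$ is then the \emph{definition} of the BFTC rather than a consequence of your argument. Segers does not iterate a conditional one-step analysis but instead works directly with the min-functional characterization of the full finite-dimensional laws, which handles the absorbing state automatically. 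Your identification of the min-functional uniqueness as ``the analytic core'' is correct, but the multi-step extension needs that same tool again, not a reverse-Markov rescaling.
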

%This definition guarantees the following identity which is used in the proof of Proposition \ref{segers2}. 
%\begin{proposition}[cf.\ \cite{segers}, Proposition 4.2]\label{backtothefuture}
%Let $(M_t)_{t \in \mathbb{Z}}$ be a BFTC($\alpha,\mu$). For all integers $m \geq  1$ and $n \geq 0$ and for every bounded, measurable function $f:\mathbb{R}^{m+n+1} \to \mathbb{R}$ %such that $f(x_{-m}, \dots, x_n)=0$ as soon as $x_{-m}=0$, the numbers
% $$ E\left[f\left(\frac{M_{-m+i}}{|M_i|}, \dots, \frac{M_{n+i}}{|M_i|} \right)|M_i|^\alpha \right], \;\;\; i = 0, \dots, m, $$
%are all the same.  
%\end{proposition}
Propositions \ref{segers1} and \ref{segers2} show that the assumption about the asymptotic behavior of $\Phi$ leads to a very simple form of the tail process for~$(Y_t)_{t \in \mathbb{Z}}$. The key ingredient is the connection between the forward and the backward limit process which is stated in \eqref{adjointmeasure}. Although this equation is sufficient for a unique determination of $\mathcal{L}(M_0,M_{-1})$ from $\mathcal{L}(M_0,M_{1})$ it appears to be cumbersome for specific applications. It is shown in \cite[Equation (3.4)]{segers} that one may derive the law of $(M_{-1}, M_0)$ from that of $(M_0,M_1)$ by noting that 
\begin{eqnarray}
\nonumber E(f(M_{-1}/M_0)|M_0=\sigma)&=& P(M_0=\sigma)^{-1}E(f(M_0/M_1)(\sigma M_1)_+^\alpha)\\
\label{easyadjointformula}&& + \, [1-P(M_0=\sigma)^{-1}E((\sigma M_1)_+^\alpha)]f(0)
\end{eqnarray}
for integrable functions $f$, and $\sigma \in \{-1,1\}$ such that $P(M_0=\sigma)>0$.

In order to discuss similar results for the above case of two connected time series we will introduce an analogous condition for~$\Psi$.
\begin{theopargself}
 \begin{condition}[2.b]
There exists a function $\psi: \mathbb{S}^{s}\times\{-1,1\} \to \mathbb{R}$ such that 
$$ \lim_{y \to +\infty} \frac{\Psi(y w(y),v(y))}{y}=w \psi(v,\sign(w))$$ 
for all $w(y)\to w \in \mathbb{R}, v(y) \to v \in \mathbb{S}^{s}$. \qed
\end{condition}
\end{theopargself} 
This condition allows for the case $\psi \equiv 0$ which is meaningless in practice. If Conditions~2.a and~2.b hold we will say that \textbf{Condition~2} is satisfied. Note that for all examples given in Remark~\ref{examples}, Condition 2.b holds due to the multiplicative form of $X_t=\Psi(\sigma_t,\epsilon_{t+1})=\sigma_t \epsilon_{t+1}$.
\section{Uniqueness of the Weak Accumulation Point}\label{uniqueness}
In the following, we will investigate the uniqueness of the accumulation point of~\eqref{inquestion} under Conditions~1 and~2.
It will turn out that the behavior of the univariate distribution $\mathcal{L}(Y_0/x\,|\, |X_0|>x)$ as $x \to \infty$ leads to a sufficient condition. 
\begin{proposition}\label{onehelpsforall} Let $(Y_t)_{t \in \mathbb{Z}}$ and $(X_t)_{t \in \mathbb{Z}}$ be stationary time series given by \eqref{Y} and \eqref{X} and let Conditions~1 and~2 hold. Equivalent are
\begin{itemize}
 \item [(i)] the weak accumulation point of \eqref{inquestion} is unique, and 
  $$\mathcal{L}(Y^{(X)}_0):=\lim_{x \to \infty}\mathcal{L}(Y_0/x \,| \, |X_0|>x)$$
  has no mass in zero,
 \item [(ii)] there exists a weak accumulation point $\mathcal{L}(\hat{Y}^{(X)}_0)$ of $\mathcal{L}(Y_0/x\,|\, |X_0|>x)$ with $\hat{Y}^{(X)}_0\neq 0$ a.s.
\end{itemize}
\end{proposition}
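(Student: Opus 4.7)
Direction (i)$\Rightarrow$(ii) is immediate: under (i) the limit $Y_0^{(X)}$ exists with no mass at~$0$, so setting $\hat Y_0^{(X)}:=Y_0^{(X)}$ exhibits an accumulation point fulfilling (ii).

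For the substantive direction (ii)$\Rightarrow$(i), the plan is first to pin down the ``bulk'' of $\mathcal{L}(Y_0/x \mid |X_0|>x)$ away from $0$ via Proposition~\ref{segers1} and Condition~2.b, and then to transfer uniqueness to the full joint distribution. For fixed $r>0$, Proposition~\ref{segers1} applied with threshold $y=rx$ yields $\mathcal{L}(|Y_0|/(rx),Y_0/|Y_0|,\epsilon_1,\ldots,\epsilon_s\mid |Y_0|>rx)\overset{w}{\Rightarrow} \mathcal{L}(Y,M_0,\epsilon_1^{(Y)},\ldots,\epsilon_s^{(Y)})$ with $Y\geq 1$ a.s. Since $YM_0\neq 0$, Condition~2.b together with the extended continuous mapping theorem extends this to $X_0/(rx)\overset{w}{\Rightarrow} YM_0\psi(\epsilon^{(Y)},M_0)$, and hence $P(|X_0|>x\mid |Y_0|>rx)\to P(rY|\psi(\epsilon^{(Y)},M_0)|>1)$. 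Multiplying by the regularly varying ratio $P(|Y_0|>rx)/P(|X_0|>x)\to r^{-\alpha}/C$, furnished by \eqref{regularvariationuniv} and \eqref{tailequivalence}, produces a specific limit $h(r):=\lim_{x\to\infty}P(|Y_0|>rx\mid |X_0|>x)$. Hence every weak accumulation point $\mu$ of $\mathcal{L}(Y_0/x\mid |X_0|>x)$ satisfies $\mu(\{|y|>r\})=h(r)$ at all but countably many $r$, so $\mu$ is unique and its mass at $0$ equals $1-\lim_{r\to 0}h(r)$. Hypothesis (ii) forces this mass to vanish, yielding the key uniform estimate $\lim_{r\to 0}\limsup_{x\to\infty}P(|Y_0|/x\leq r\mid |X_0|>x)=0$.

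For the joint accumulation point I would decompose $P(Y_m/x\in A_m,\ldots,Y_n/x\in A_n\mid |X_0|>x)$ along $\{|Y_0|>rx\}$ and its complement. The complement contributes at most $P(|Y_0|\leq rx\mid |X_0|>x)$, which by the previous estimate is arbitrarily small for $r$ small and $x$ large. On $\{|Y_0|>rx\}$, Proposition~\ref{segers2} combined once more with Condition~2.b delivers joint convergence of $(Y_m/(rx),\ldots,Y_n/(rx),X_0/(rx))$ given $|Y_0|>rx$ to $(YM_m,\ldots,YM_n,YM_0\psi(\epsilon^{(Y)},M_0))$, where $(M_t)_{t\in\mathbb{Z}}$ is the BFTC associated with $(Y_t)_{t\in\mathbb{Z}}$. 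Intersecting with $\{|X_0|>x\}$ and renormalising by $P(|Y_0|>rx)/P(|X_0|>x)$ identifies a unique limit for the joint probability restricted to $\{|Y_0|>rx\}$; letting $r\to 0$ and invoking the uniform estimate above completes the argument.

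The principal obstacle is precisely this interchange of the limits $r\to 0$ and $x\to\infty$, which is made rigorous by the uniform estimate derived from (ii). A secondary delicate point is the extended continuous mapping step, which requires verifying via the Skorokhod representation that the limiting factor $YM_0$ is almost surely nonzero, so that Condition~2.b indeed produces the nondegenerate limit above; this is guaranteed since $Y\geq 1$ a.s.
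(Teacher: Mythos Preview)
Your proposal is correct and follows essentially the same strategy as the paper's proof: rewrite the conditional probability by conditioning on $\{|Y_0|>rx\}$, apply the tail-chain convergence (Propositions~\ref{segers1}/\ref{segers2}) together with Condition~2.b and the continuous mapping theorem to identify the limit explicitly, and use the no-mass-at-zero hypothesis (equivalently Lemma~\ref{pointmass}, which gives $C=E(|\chi|^\alpha)$) to control the remainder on $\{|Y_0|\le rx\}$. The paper executes this directly on a $\pi$-system of half-infinite rectangles for the block $(Y_0,\ldots,Y_s)$ only (deferring general $m,n$ to Proposition~\ref{mainprop}), whereas you phrase the same idea via a uniform estimate and a limit interchange in $r$ and $x$; the substance is the same, and the only point to make fully explicit is that the joint convergence you invoke---backward $M$'s together with $\epsilon_1^{(Y)},\ldots,\epsilon_s^{(Y)}$---follows because the forward innovations remain independent of $(Y_{-m},\ldots,Y_0)$ under the conditioning.
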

Consequently, the uniqueness of the limit in \eqref{inquestion} may well be derived from \textit{any} weak accumulation point. The following lemma will be used in the proof of Proposition~\ref{onehelpsforall}. In addition, it is also of interest in its own right 
as it provides a criterion for
property (ii) of Proposition~\ref{onehelpsforall}.
\begin{lemma}\label{pointmass}
Let the assumptions of Proposition \ref{onehelpsforall} hold and let $\mathcal{L}(\hat{Y}_0^{(X)})$ be any weak accumulation point of $\mathcal{L}(Y_0/x\,|\, |X_0|>x)$. Then
 $$ P(\hat{Y}^{(X)}_0=0)=1-C^{-1}E(|\chi|^\alpha), $$
with
 $$ \chi=\chi(M_0, \epsilon_1^{(Y)}, \dots, \epsilon_s^{(Y)})=M_0\cdot \psi(\epsilon_1^{(Y)}, \dots, \epsilon_s^{(Y)}, \sign(M_0)),$$
where $M_0, \epsilon_1^{(Y)}, \dots, \epsilon_s^{(Y)}$ are defined as in Proposition \ref{segers1}, and $C$ is given by \eqref{tailequivalence}. 
\end{lemma}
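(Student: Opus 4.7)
The plan is, for a weak accumulation point $\mathcal{L}(\hat{Y}_0^{(X)})$ obtained along some subsequence $x_n\to\infty$, to compute $P(|\hat{Y}_0^{(X)}|>\delta)$ at continuity points $\delta>0$ and then to pass $\delta\downarrow 0$. The central technical step is to establish the joint weak convergence
\begin{equation*}
\mathcal{L}\left(\frac{|Y_0|}{y},\, \frac{X_0}{y}\;\middle\vert\; |Y_0|>y\right) \overset{w}{\Rightarrow} \mathcal{L}(Y,\, Y\chi) \quad\text{as } y\to\infty,
\end{equation*}
where $Y, M_0, \epsilon_i^{(Y)}$ are as in Proposition~\ref{segers1} and $\chi$ is as in the statement. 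To obtain this I would invoke Proposition~\ref{segers1} (with $n=s$), which yields conditional weak convergence of $(|Y_0|/y,\sign(Y_0),\epsilon_1,\dots,\epsilon_s)$ to $(Y,M_0,\epsilon_1^{(Y)},\dots,\epsilon_s^{(Y)})$, pass to an almost-sure realization via the Skorokhod representation, and then apply Condition~2.b with $u=|Y_0|\to\infty$, $w(u)=\sign(Y_0)$, $v(u)=(\epsilon_1,\dots,\epsilon_s)$ to the ratio $\Psi(|Y_0|\sign(Y_0),\epsilon_1,\dots,\epsilon_s)/|Y_0|$. This gives $X_0/|Y_0|\to \chi$ a.s., hence $X_0/y=(X_0/|Y_0|)(|Y_0|/y)\to Y\chi$ a.s.

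Given this convergence, for any $\delta>0$ such that $1/\delta$ is a continuity point of $\mathcal{L}(Y|\chi|)$, a direct computation exploiting $Y\sim \mathrm{Par}(\alpha)$ independent of $\chi$ produces
\begin{equation*}
\lim_{y\to\infty} P(|X_0|/y>1/\delta\mid |Y_0|>y) = P(Y|\chi|>1/\delta)=\delta^\alpha E[|\chi|^\alpha\wedge \delta^{-\alpha}].
\end{equation*}
I would then use the elementary identity
\begin{equation*}
P(|Y_0|/x>\delta\mid |X_0|>x) = \frac{P(|X_0|>x\mid |Y_0|>\delta x)\,P(|Y_0|>\delta x)}{P(|X_0|>x)},
\end{equation*}
together with Condition~1: regular variation provides $P(|Y_0|>\delta x)/P(|Y_0|>x)\to \delta^{-\alpha}$ and the tail-equivalence~\eqref{tailequivalence} provides $P(|X_0|>x)/P(|Y_0|>x)\to C$. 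Combining these gives
\begin{equation*}
\lim_{x\to\infty} P(|Y_0|/x>\delta\mid|X_0|>x) = C^{-1}\, E[|\chi|^\alpha\wedge \delta^{-\alpha}].
\end{equation*}

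Along the subsequence $x_n$, the left-hand side above tends to $P(|\hat{Y}_0^{(X)}|>\delta)$ whenever $\delta$ is a continuity point of $|\hat{Y}_0^{(X)}|$; such $\delta$ form a co-countable subset of $(0,\infty)$. Letting $\delta\downarrow 0$ through this set and applying monotone convergence on the right yields $P(\hat{Y}_0^{(X)}\neq 0)=C^{-1}\,E[|\chi|^\alpha]$, which is equivalent to the claimed formula. The main obstacle is the joint weak convergence stated above: Condition~2.b is an asymptotic statement about $\Psi$ rather than a standard continuity condition, so a careful continuous-mapping-type argument via the Skorokhod representation is needed, which is streamlined precisely by the slightly strengthened form of Condition~2.b adopted in the paper.
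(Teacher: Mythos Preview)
Your proposal is correct and follows essentially the same route as the paper's proof: both use the Bayes-type identity to swap the conditioning from $\{|X_0|>x\}$ to $\{|Y_0|>\delta x\}$, apply Proposition~\ref{segers1} together with Condition~2.b (via a continuous-mapping/Skorokhod argument) to obtain $\mathcal{L}(|X_0|/y\mid |Y_0|>y)\Rightarrow \mathcal{L}(Y|\chi|)$, and then let $\delta\downarrow 0$. Your closed-form expression $C^{-1}E[|\chi|^\alpha\wedge\delta^{-\alpha}]$ is exactly the paper's ratio $C^{-1}P(Y|\chi|>\delta^{-1})/P(Y>\delta^{-1})$, and both conclude by monotone convergence; the only cosmetic difference is that the paper phrases the final step as a Breiman-type limit for $P(Y|\chi|>x)/P(Y>x)$.
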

\begin{proof}
For $a > 0$ we have
\begin{equation*}
P\left(x^{-1}|Y_0|>a\;\middle\vert\; |X_0|>x\right)
=P\left((ax)^{-1}|X_0|>a^{-1} \;\middle\vert\; |Y_{0}| > ax\right) \cdot \frac{P(|Y_0| > ax)}{P(|X_{0}|>x)}.
\end{equation*}
With $x \to \infty$ the second term converges to $C^{-1}a^{-\alpha}$ by Condition~1. For the first term we analyze the limit of 
\begin{equation}
\mathcal{L}\left((ax)^{-1}|X_0|  \;\middle\vert\; |Y_{0}| > ax\right)=\mathcal{L}\left((a x)^{-1}  \left|\Psi\left(a x \frac{Y_0}{a x},\epsilon_1, \dots, \epsilon_{s}\right)\right|\; \vline \; |Y_{0}| > a x \right) \label{XconditionedonY}
\end{equation}
as $x \to \infty$. By an application of the continuous mapping theorem (cf.\ \cite[Theorem 4.27]{kallenberg}) in combination with Condition~2 and Proposition~\ref{segers1} this converges to $\mathcal{L}\left(|Y \cdot \chi|\right)$. Since $Y$ is Pareto distributed and independent of $\chi$ we may rule out a point mass of $|Y \cdot \chi|$ in $1/a$, and conclude that
$$ \lim_{x \to \infty} P\left((ax)^{-1}|X_0|>a^{-1} \;\middle\vert\; |Y_{0}| > ax\right)=P\left(|Y \cdot \chi|>a^{-1}\right).$$
Therefore, for a sequence $a_n \searrow 0$ which avoids possible point masses of $\hat{Y}_0^{(X)}$ it follows that
$$P(|\hat{Y}^{(X)}_0| > a_n) = P\left(|Y \cdot \chi| > a_n^{-1}\right)\cdot \frac{a_n^{-\alpha}}{C} = \frac{P\left(Y\cdot |\chi| > a_n^{-1}\right)}{P\left(Y>a_n^{-1}\right)} \cdot \frac{1}{C}.$$ 
The result now follows with $a_n \searrow 0$ if we show that
\begin{equation}\label{kindofbreiman}\lim_{x \to \infty} \frac{P(Y\cdot |\chi|>x)}{P(Y>x)}=E(|\chi|^\alpha),\end{equation} 
which can be seen as a kind of extension of Breiman's Theorem (cf.\ \cite{breiman}) for the special case of $Y \sim \mbox{Par}(\alpha)$. It follows because $P(Y\cdot |\chi| > x)= P(Y \cdot|\chi| > x,$ $|\chi|\leq x) + P(Y\cdot |\chi| > x,|\chi| > x)$, where the first term equals $\int_0^x P(Y>\frac{x}{z})dP^{|\chi|}(z)=\int_0^x\left(\frac{z}{x}\right)^\alpha dP^{|\chi|}(z).$ The second term equals $P(|\chi|>x)$, since $Y\geq 1$ a.s. Thus, $P(Y \cdot |\chi|>x)/P(Y>x)$ $=x^\alpha P(Y \cdot |\chi|>x)$ $=\int_0^\infty z^\alpha \mathds{1}_{[0,x]}(z)+x^\alpha \mathds{1}_{(x, \infty)}(z)dP^{|\chi|}(z)$ and \eqref{kindofbreiman} follows from monotone convergence. This gives the result.
\end{proof}
\begin{proof}[Proof of Proposition \ref{onehelpsforall}]
We show that (ii) implies (i). Let $\nu_{1}$ and $\nu_{2}$ denote two weak accumulation points. In the following, let $a_0 \geq 0, a_{1},\ldots,a_s \in \mathbb{R}$, and $A = A(a_{1},\ldots,a_s) := (a_{1},\infty)\times \ldots \times (a_s,\infty)$. We will show that $\nu_{k}((a_0,\infty) \times A)$ and $\nu_{k}([-a_0,\infty) \times A)$ do not depend on $k \in \{1,2\}$. Here, we shall use that  (ii) implies $C = E(|\chi|^{\alpha})$ by Lemma \ref{pointmass}, which in turn implies that $\nu(\{0\} \times \mathbb{S}^{s}) =0$ for any weak accumulation point $\nu$. Since the above sets form a generating $\pi$-system, any two weak accumulation points coincide. By tightness (cf.\ Proposition \ref{sequenceistight}) this implies weak convergence.\\
Consider first $a_0>0$ and $a_{1},\ldots,a_s\neq 0$ that avoid the at most countably many point masses of the coordinate projections of $\nu_{1}$ and $\nu_{2}$. Then, $\nu_{k}((a_0,\infty) \times A)$ is the limit of 
\begin{equation}\label{noabsolutvalue}P\left(\frac{Y_0}{x}>a_0,\frac{Y_1}{x}>a_{1},\ldots,\frac{Y_s}{x}>a_{s}\;\middle\vert\; |X_0|>x\right)
\end{equation}
along a subsequence depending on $k$.  For general $x$, insert $\frac{|Y_0|}{x}>a_0$ in \eqref{noabsolutvalue}. This probability equals 
\begin{eqnarray*}
&& P\left(\frac{Y_{0}}{a_0x}>1,\frac{Y_{1}}{a_0x}>\frac{a_{1}}{a_0},\ldots,\frac{Y_s}{a_0x}>\frac{a_s}{a_0}, \frac{|X_{0}|}{a_0x}>\frac{1}{a_0} \;\middle\vert\; |Y_{0}| > a_0x\right)\\
&& \hspace{1cm}\cdot \frac{P(|Y_{0}| > a_0x)}{P(|X_{0}|>x)}\,.
\end{eqnarray*}
By Conditions~1 and~2, and since the variables have point masses at most at zero, this converges to 
$$P\left(Y  M_0 > 1,Y M_1 > \frac{a_{1}}{a_0},\ldots, Y  M_{s}>\frac{a_s}{a_0}, Y|\chi| > \frac{1}{a_0} \right)\cdot \frac{1}{C}a_0^{-\alpha},$$
cf.\ the proof of Lemma \ref{pointmass}.

We have shown that  $\nu_{k}((a_0,\infty) \times A)$ does not depend on $k$. Approximation from inside extends this to all $a_0 \geq 0$ and $a_{1},\ldots,a_{s} \in \mathbb{R}$.
Replacing $\frac{Y_{0}}{x}>a_0$ by $\frac{Y_{0}}{x}<-a_0$ the same computation followed by an approximation argument shows the same for $\nu_{k}((-\infty,-a_0)\times A)$. Combining these two results for $a_0=0$ with $\nu_{k}(\{0\} \times \mathbb{R}^{s}) =0$ shows that $\nu_{k}(\mathbb{R} \times A)$ does not depend on $k \in \{1,2\}$. Thus, the same holds for the sets $[-a_0,\infty) \times A = (\mathbb{R} \times A) \setminus ((-\infty,-a_0) \times A)$.
%
% \\
% \noindent{\bf Examples.} If $X_{i},Y_{i}$ are nonnegative processes and $X_{i} = Y_{i} \cdot \psi(\epsilon_{i+1})$ with a continuous (xxx?) function $\psi$, then any of the following implies $C = E(\chi^{\alpha})$:\\
% (i) $E(\psi(\epsilon_{0})^{\alpha+\delta}) < \infty$ for some $\delta > 0$ (xxxBreiman, extend?)\\
% (ii) $Y_{0} \sim Par(\alpha)$  (xxxDenisovZwartzitat or Appendix III.1)\\
% (iii) $P(Y_{0} > x) \sim c\cdot x^{-\alpha}$ and $E(Y_{0}^{\alpha}) < \infty$ (xxxsee DenisovZwart for ref.)
\end{proof}
\begin{remark} Lemma \ref{pointmass} shows that Prop.~\ref{onehelpsforall} (ii), and thus (i), holds if and only if $C = E(|\chi|^{\alpha})$. We give some examples. Suppose that $(X_{t})_{t \in \mathbb{Z}}$ and $(Y_{t})_{t \in \mathbb{Z}}$ are nonnegative time series and $X_{t} = Y_{t} \cdot \psi(\epsilon_{t+1})$, thus  $\chi = \psi$. Then, \linebreak $E(\psi(\epsilon_{0})^{\alpha+\delta})<\infty$ for some $\delta >0$ implies $C = E(|\chi|^\alpha) < \infty$ by Breiman's Theorem (cf.\ \cite{breiman}) and Condition~2 holds if  $\psi$ is continuous. If $P(Y_{0} >x) \sim c\cdot x^{-\alpha}$ for some $c > 0$, then $E(\psi(\epsilon_{0})^{\alpha}) < \infty$ suffices to derive the same result (cf.\ e.g.\ \cite[Lemma 2.1]{gomes}). For the special case $Y_{0} \sim Par(\alpha)$ cf.\ the end of the proof of Lemma~3.2. For further generalizations of Breiman's Theorem see \cite{denisov}.
\end{remark}
\begin{remark}
By similar computations it can be shown that under the assumptions of Proposition \ref{onehelpsforall} uniqueness of the weak limit in \eqref{inquestion} is also ensured by $P(M_{-1}=0)=0$, with $M_{-1}$ as in Proposition \ref{segers2}. A key step in the argument shows that this condition implies weak convergence of 
$$\mathcal{L}\left(\frac{Y_{-m}}{y},\epsilon_{-m+1}, \dots , \frac{Y_{-1}}{y},\epsilon_{0},\frac{Y_{0}}{y},\epsilon_{1},\frac{Y_{1}}{y},\ldots,\epsilon_{n},\frac{Y_{n}}{y}\;\middle\vert\; |Y_0|>y\right)$$
as $y \to \infty$ for all $m \geq 1$ and $n \geq 0$. We give an example with $P(M_{-1}=0) = 0$ but $P(Y^{(X)}_{0}=0)> 0$, i.e.\ $P(M_{-1}=0) = 0$ may ensure uniqueness even if property (ii) in Proposition \ref{onehelpsforall} fails. To this end, let $Y_{0}$ and $\epsilon_{t},  t \in \mathbb{Z},$ be nonnegative i.i.d.\ random variables with $P(Y_{0} > x) = x^{-1}\ln(x)^{-2}$ for  $x \geq c$, where  $c \approx 2.02$ solves $x \cdot \ln(x)^2 = 1$. With $\Phi(y,v) = y$, let $Y_{t} = Y_{0}$ for all $t \in  \mathbb{Z}$. Then, $Y_{-1}=Y_{0}$ implies $Y\cdot M_{-1} \sim Y \sim\, $Par$(1)$, thus $P(M_{-1} = 0) = 0$. For $s_{-}=-1,s_{+}=1$ let $X_{t}=Y_{t}\cdot \epsilon_{t+1},  t \in \mathbb{Z}$. Careful calculations show that $C = \lim_{x\to \infty}\frac{P(X_{0} > x)}{P(Y_{0}> x)} = 2(c+\sqrt{c})$ (cf.\ \cite{denisov} for similar arguments). But with $\alpha =1$ it holds that $E(|\chi|^{\alpha}) = E(\epsilon_{1}) = \int_{0}^{\infty}P(\epsilon_{1} > x)dx = c +\frac{1}{\ln(c)} = c + \sqrt{c}$, thus $P(Y^{X}_0=0)=1/2$ 
by Lemma \ref{pointmass}.
\end{remark}
\section{Structure of the Limit Process}\label{specialform}
While the existence of a limit in \eqref{inquestion} has been analyzed in the preceding section we will now deal with the particular form of the limit.
For easy reference we shall introduce the following condition.\\
\begin{theopargself}
 \begin{condition}[3]
There exists a random vector $(Y^{(X)}_0, \dots, Y^{(X)}_s)$ such that
\begin{equation*}
%\label{firstpart}
\lim_{x \to \infty} \mathcal{L}\left(\frac{Y_0}{x}, \dots, \frac{Y_{s}}{x}\;\middle\vert\; |X_0|>x\right)=\mathcal{L}(Y^{(X)}_0, \dots, Y^{(X)}_{s}).
\end{equation*} \qed
\end{condition}
\end{theopargself}

We assume that the limit distribution in Condition 3 is unique in order to simplify the statement of the proposition below. Note, however, Remark \ref{hasnottobeunique} at the end of this section for a generalization to the case of non-uniqueness.
We will use Conditions~1 to~3 to derive a result for the form of the limit in \eqref{inquestion} which is similar to Proposition \ref{segers2}. 

While Conditions~1 and~2 bear a natural resemblance to the assumptions made in \cite{segers}, Condition~3 is necessary to ensure that a ``starting point'' for a tail chain exists that covers the time span from $0$ to $s$ where the $\epsilon_1, \dots, \epsilon_{s}$ and therefore $Y_{0}, \dots, Y_{s}$ are directly influenced by the event $\{|X_0|>x\}$. We will see that outside of this range the behavior of the process $(Y_t)_{t \in \mathbb{Z}}$ corresponds to Proposition \ref{segers2}.
\begin{proposition}\label{mainprop} Let $(Y_t)_{t \in \mathbb{Z}}$ and $(X_t)_{t \in \mathbb{Z}}$ be stationary time series given by \eqref{Y} and \eqref{X} and let Conditions~1, 2 and~3 hold.
Then, for all integers $m \geq 0$ and $n \geq 0$ we have 
\begin{eqnarray}\label{E:mainprop}\lim_{x \to \infty}\mathcal{L}\left(\frac{Y_{-m}}{x},\dots,\frac{Y_{s+n}}{x} \;\middle\vert\; |X_0|>x\right)
&=&\mathcal{L}(Y^{(X)}_{-m},\dots,Y^{(X)}_{s+n})\end{eqnarray}
with $(Y^{(X)}_0,\dots, Y^{(X)}_{s})$ as in Condition~3, and
\begin{align*} 
	Y_t^{(X)} &= h(Y_{t-1}^{(X)},A_t,B_t), &t&>s, \\
	Y_{-t}^{(X)} &= h(Y_{-t+1}^{(X)},A_{-t}, B_{-t}), &t&> 0,
\end{align*}
cf.\ Proposition \ref{segers1} for the definition of $h$. Here, $(A_t,B_t), t \in \mathbb{Z},$ are independent, and independent of $(Y_0^{(X)}, \dots, Y_{s}^{(X)})$ with
$$ \mathcal{L}(A_t,B_t)=\mathcal{L}(A_1,B_1), \; t \geq 1, \;\;\, \mathcal{L}(A_t,B_t)=\mathcal{L}(A_{-1},B_{-1}), \, t \leq -1.$$
Further, $\mathcal{L}(A_1,B_1)$ and $\mathcal{L}(A_{-1},B_{-1})$ are as in Definition \ref{bftc}.
\end{proposition}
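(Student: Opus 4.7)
My plan is to split the time window $\{-m, \ldots, s+n\}$ into three pieces: the starting window $\{0, 1, \ldots, s\}$ covered by Condition 3, the forward extension $\{s+1, \ldots, s+n\}$, and the backward extension $\{-m, \ldots, -1\}$. I would treat these as two modular extensions beyond the window, proved by essentially parallel arguments, and then glue them together via a conditional independence argument at time $0$.

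For the forward extension I use the standing assumption that $(\epsilon_t)_{t>s'}$ is independent of $(Y_t)_{t \leq s'}$ for each $s'$. Applied with $s' = s$, this makes $\epsilon_{s+1}, \ldots, \epsilon_{s+n}$ independent of $(Y_0, \ldots, Y_s, X_0)$, so conditioning on $\{|X_0|>x\}$ leaves their law untouched. Combined with Condition 3 I obtain the joint weak limit of $((Y_0/x, \ldots, Y_s/x), \epsilon_{s+1}, \ldots, \epsilon_{s+n})$ under $\{|X_0|>x\}$. I would then iterate Condition 2.a exactly as in the proof of Proposition~\ref{segers1}: writing $Y_{s+t}/x = \Phi(x\cdot Y_{s+t-1}/x,\epsilon_{s+t})/x$ and applying the continuous mapping theorem, I get $Y_{s+t}/x \Rightarrow Y^{(X)}_{s+t-1}\phi(\epsilon_{s+t}^{(Y)},\sign(Y^{(X)}_{s+t-1}))=h(Y^{(X)}_{s+t-1},A_{s+t},B_{s+t})$, with $(A_{s+t},B_{s+t})$ independent of the past window. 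The only delicate point is the possible mass of $Y_s^{(X)}$ at zero, handled by observing that the product $w\phi(v,\sign(w))$ is continuous at $w=0$ since it vanishes there.

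The backward extension is the main obstacle. Here the crucial structural fact, again from the independence of $(\epsilon_t)_{t>0}$ and $(Y_t)_{t\leq 0}$, is that conditional on $Y_0$ the past $V_- := (Y_{-m}, \ldots, Y_{-1})$ is independent of $(V_+, X_0)$, where $V_+ := (Y_0, \ldots, Y_{s+n})$. Consequently, for bounded continuous $f_-, f_+$,
\begin{equation*}
E\bigl[f_-(V_-/x)\, f_+(V_+/x) \,\big|\, |X_0|>x\bigr] = E\bigl[G_-(Y_0,x)\, f_+(V_+/x)\,\big|\, |X_0|>x\bigr],
\end{equation*}
where $G_-(y_0,x) := E[f_-(V_-/x)\mid Y_0=y_0]$ and the conditioning on $|X_0|>x$ has dropped out of the inner expectation. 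The key lemma to establish is continuous convergence $G_-(xz,x) \to \tilde G_-(z)$ for $z \neq 0$, with $\tilde G_-(z) := E[f_-(M_{-m}^{z}, \ldots, M_{-1}^{z})]$ and $(M_{-t}^{z})$ the backward $h$-recursion from Definition \ref{bftc} started at $z$. I would obtain this from Proposition~\ref{segers2} applied to the stationary Markov chain $(Y_t)$: that proposition yields the joint limit of $(V_-/y,Y_0/y)\mid |Y_0|>y$ as $(YM_{-m}, \ldots, YM_0)$, and disintegrating by the last coordinate gives the conditional limit of $V_-/y$ given $Y_0/y = z$ for $|z|>1$. The range $0<|z|<1$ is reduced to $|z|=1$ by rescaling with $y' := y|z|$, in which case $\{Y_0/y'=\sign(z)\} \subset \{|Y_0|>y'\}$ and the superfluous tail conditioning disappears; the Markov and scaling structure of the $h$-recursion ensures the resulting limit is $\tilde G_-(z)$.

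To assemble the pieces, I combine the forward convergence from the first step with the continuous convergence $G_-(x\,\cdot\,,x) \to \tilde G_-$, which is bounded and continuous on $\mathbb{R}\setminus\{0\}$. A standard uniform-on-compacta argument, together with tightness from Proposition~\ref{sequenceistight}, then yields
\begin{equation*}
E[G_-(Y_0,x)\,f_+(V_+/x)\mid |X_0|>x] \longrightarrow E[\tilde G_-(Y_0^{(X)})\,f_+(V_+^{(X)})] = E[f_-(V_-^{(X)})\,f_+(V_+^{(X)})],
\end{equation*}
where the last equality uses the conditional independence of $V_-^{(X)}$ and $V_+^{(X)}$ given $Y_0^{(X)}$ that is built into the limit model. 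The possible mass of $Y_0^{(X)}$ at $0$ is harmless: there both the prelimit and the limit collapse to the zero chain via the $h$-recursion, and the contribution of a small neighbourhood of $0$ to the outer expectation is controlled uniformly in $x$ by tightness. This gives the claimed weak convergence for every $m,n\geq 0$ and therefore the proposition.
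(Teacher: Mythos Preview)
Your forward extension is correct and coincides with the paper's argument for $m=0$: the independence of $(\epsilon_{s+1},\ldots,\epsilon_{s+n})$ from $(X_0,Y_0,\ldots,Y_s)$ together with Condition~3 and iterated use of Condition~2.a via the continuous mapping theorem is exactly how the paper proceeds.

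The backward extension, however, contains two real gaps.

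\emph{Disintegration does not follow from weak convergence.} Your key lemma is the continuous convergence $G_-(xz,x)\to\tilde G_-(z)$ for $z\neq 0$, where $G_-(y_0,x)=E[f_-(Y_{-m}/x,\ldots,Y_{-1}/x)\mid Y_0=y_0]$. You propose to obtain this by ``disintegrating'' Proposition~\ref{segers2} by the coordinate $Y_0/y$. But weak convergence of joint laws does not imply convergence of regular conditional distributions; one needs additional regularity that is nowhere assumed here. Note also that, unlike the forward kernel $y\mapsto\mathcal L(\Phi(y,\epsilon))$, the \emph{backward} transition kernel of a stationary Markov chain is defined only $\pi$-almost everywhere with respect to the stationary law, so pointwise statements about $G_-(xz,x)$ for a fixed $z$ are not even well posed without further structure. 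Your rescaling argument for $0<|z|<1$ still ends up conditioning on the null event $\{Y_0/y'=\sign(z)\}$ and does not escape this difficulty.

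\emph{The neighbourhood of zero is not controlled by tightness.} You claim that for $Y_0/x$ near $0$ ``both the prelimit and the limit collapse to the zero chain via the $h$-recursion''. This is true for the limit, but not for the prelimit: the actual chain follows the $\Phi$-recursion, and $|Y_0|/x\leq\delta$ in no way forces $|Y_{-m}|/x$ to be small. Tightness only bounds $P(|Y_{-m}|/x>K\mid|X_0|>x)$ for large $K$; it says nothing about the joint event $\{|Y_{-m}|/x>\eta,\ |Y_0|/x\leq\delta\}$. Showing that this joint probability is small is genuinely nontrivial and is precisely the content of the paper's Lemma~\ref{etadeltalemma} (and Corollary~\ref{makesmainproofeasier}), whose proof uses Conditions~1 and~2 together with Proposition~\ref{segers1} in an essential way.

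The paper's route avoids both problems by never conditioning on $\{Y_0=y_0\}$. It proceeds by induction on $m$: for a test function $f$ with $f(0,\ldots)=0$, Corollary~\ref{makesmainproofeasier} allows inserting $\mathds 1_{\{|Y_0|>\delta x\}}$ at negligible cost; one then swaps the conditioning from $\{|X_0|>x\}$ to the positive-probability event $\{|Y_0|>\delta x\}$ via the ratio $P(|Y_0|>\delta x)/P(|X_0|>x)$, applies Propositions~\ref{segers1} and~\ref{segers2} directly (these are stated under exactly that conditioning), and then reverses the swap. The Markovian conditional-independence idea you use is implicit in this manoeuvre, but it is exploited only at the level of events of positive probability, which is what makes the argument go through.
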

The proof is predecessed by a lemma and a corollary where we only assume that Conditions~1 and~2 hold.
\begin{lemma}\label{etadeltalemma}
Let $m\geq 0$. For any $\eta>0$ there is $\delta_0(\eta)>0$ such that for $x$ large enough
 $$ P\left(\frac{|Y_{-m}|}{x}>\eta, \frac{|Y_{0}|}{x} \leq \delta\;\middle\vert\; |X_0|>x\right)<\eta $$ 
for all $\delta<\delta_0(\eta)$. 
\end{lemma}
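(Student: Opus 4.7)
The plan is to exploit joint stationarity to convert the backward question about $Y_{-m}$ into a forward question about $Y_m$ starting from a large $|Y_0|$, and then control that forward behaviour by the tail chain of Proposition~\ref{segers1} extended by Condition~2.b.

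\textbf{Step 1 (stationarity shift).} Since the joint process $(Y_t,X_t)_{t\in\mathbb{Z}}$ is stationary, $(Y_{-m},Y_0,X_0)\overset{d}{=}(Y_0,Y_m,X_m)$. Hence
\[
P\!\left(|Y_{-m}|>\eta x,\,|Y_0|\leq\delta x,\,|X_0|>x\right)=P\!\left(|Y_0|>\eta x,\,|Y_m|\leq\delta x,\,|X_m|>x\right).
\]

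\textbf{Step 2 (split according to $|Y_0|$).} For $N>1$, the right-hand event is contained in $\{|Y_0|>Nx\}\cup\{\eta x<|Y_0|\leq Nx,\,|Y_m|\leq\delta x,\,|X_m|>x\}$. By Condition~1, $\limsup_{x}P(|Y_0|>Nx)/P(|X_0|>x)=N^{-\alpha}/C$. Pick $N$ so large that $N^{-\alpha}/C<\eta/2$.

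\textbf{Step 3 (forward tail chain on the second piece).} Apply Proposition~\ref{segers1} with $n\geq m+s$ and, exactly as in the proof of Lemma~\ref{pointmass}, combine it with Condition~2.b via the continuous mapping theorem to obtain
\[
\mathcal{L}\!\left(\frac{|Y_0|}{\eta x},\,\frac{Y_m}{\eta x},\,\frac{X_m}{\eta x}\;\middle\vert\;|Y_0|>\eta x\right)\overset{w}{\Rightarrow}\mathcal{L}(Y,\,YM_m,\,Y\chi_m),
\]
where $\chi_m:=M_m\,\psi(\epsilon_{m+1}^{(Y)},\dots,\epsilon_{m+s}^{(Y)},\sign(M_m))$. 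Applying the Portmanteau theorem to the closed set $\{|u|\leq\delta/\eta,\,|v|\geq 1/\eta\}$ and using Condition~1,
\[
\limsup_{x\to\infty}\frac{P(\eta x<|Y_0|\leq Nx,\,|Y_m|\leq\delta x,\,|X_m|>x)}{P(|X_0|>x)}\leq \frac{\eta^{-\alpha}}{C}\,P\!\left(|YM_m|\leq\tfrac{\delta}{\eta},\,|Y\chi_m|\geq\tfrac{1}{\eta}\right).
\]

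\textbf{Step 4 (key dichotomy).} On the event $\{|YM_m|\leq\delta/\eta,\,|Y\chi_m|\geq 1/\eta\}$, either $M_m=0$, in which case $\chi_m=0$, contradicting $|Y\chi_m|\geq 1/\eta$; or $M_m\neq 0$, in which case $|\psi(\epsilon_{m+1}^{(Y)},\dots,\epsilon_{m+s}^{(Y)},\sign(M_m))|=|Y\chi_m|/|YM_m|\geq 1/\delta$. Hence
\[
P\!\left(|YM_m|\leq\tfrac{\delta}{\eta},\,|Y\chi_m|\geq\tfrac{1}{\eta}\right)\leq P\!\left(|\psi(\epsilon_{m+1}^{(Y)},\dots,\epsilon_{m+s}^{(Y)},\sign(M_m))|\geq\tfrac{1}{\delta}\right),
\]
which tends to $0$ as $\delta\to 0$ because $\psi$ is real-valued (Condition~2.b). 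Choose $\delta_0(\eta)>0$ so that for every $\delta<\delta_0(\eta)$ the second piece contributes less than $\eta/2$; combined with Step~2 this gives the asserted bound for $x$ large.

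\textbf{Main obstacle.} The delicate point is Step~3, namely the joint weak convergence of $(|Y_0|/(\eta x),Y_m/(\eta x),X_m/(\eta x))$ conditional on $|Y_0|>\eta x$. This is essentially the same extended continuous mapping argument used for $m=0$ in the proof of Lemma~\ref{pointmass}, with Condition~2.b ensuring that $\Psi(Y_m,\epsilon_{m+1},\dots,\epsilon_{m+s})/|Y_0|$ converges to $M_m\psi(\epsilon_{m+1}^{(Y)},\dots,\epsilon_{m+s}^{(Y)},\sign(M_m))$ along every sequence produced by the tail chain, including the degenerate case $M_m=0$. Everything else reduces to Condition~1 and elementary bookkeeping.
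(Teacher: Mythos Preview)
Your proof is correct and follows essentially the same approach as the paper: a stationarity shift to turn the backward event into a forward one, then the tail-chain limit of Proposition~\ref{segers1} combined with Condition~2.b to obtain the limit $P(Y|M_m|\le\delta/\eta,\,Y|\chi_m|\ge 1/\eta)$, followed by the same dichotomy to bound this by $P(|\psi(\epsilon_{m+1}^{(Y)},\dots,\epsilon_{m+s}^{(Y)},\sign(M_m))|\ge 1/\delta)$. The only difference is that your Step~2 (the $N$-truncation) is unnecessary: your Step~3 upper bound already drops the constraint $|Y_0|\le Nx$, so it bounds the full term $P(|Y_0|>\eta x,\,|Y_m|\le\delta x,\,|X_m|>x)/P(|X_0|>x)$ directly, which is exactly what the paper does.
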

\begin{proof} For $m=0$ the statement follows with $\delta_0(\eta)=\eta$. So assume that $m>0$. 
The l.h.s.\ equals 
$$P\left(\frac{|Y_{0}|}{x} \leq \delta,  |X_{0}| > x  \; \vline \; \frac{|Y_{-m}|}{x} > \eta \right)\cdot \frac{P(|Y_{-m}|>\eta x)}{P(|X_{0}| > x)}.$$
The second factor converges to $C^{-1}\eta^{-\alpha}$ by Condition~1. It suffices to show that the first factor becomes small for $\delta \to 0$.
To this end, note that by stationarity the first factor equals
$$ P\left(\frac{|Y_{m}|}{\eta x} \leq \frac{\delta}{\eta},  \frac{|X_{m}|}{\eta x} > \frac{1}{\eta}  \; \vline \; |Y_{0}| > \eta x \right) $$
which by definition of $X_{m}$ equals 
$$ P\left(\frac{|Y_{m}|}{\eta x} \leq \frac{\delta}{\eta},  \frac{\left|\Psi\left(\eta x \frac{Y_{m}}{\eta x},\epsilon_{m+1}, \dots, \epsilon_{m+s}\right)\right|}{\eta x} > \frac{1}{\eta}  \; \vline \; |Y_{0}| > \eta x \right).$$
We proceed as in the proof of Lemma \ref{pointmass}. By an application of the continuous mapping theorem with Condition~2 and Proposition \ref{segers1} this converges to \linebreak $P\left(Y|M_{m}| \leq \delta / \eta, Y |\chi_{m}| >  1 /\eta\right)$ with 
$$  \chi_{m}:= M_{m} \psi(\epsilon_{m+1}^{(Y)}, \dots, \epsilon_{m+s}^{(Y)},\sign(M_{m})).$$
Again, we use that the two limit random variables include $Y \sim \mbox{Par}(\alpha)$ as an independent factor
which excludes point masses on the positive axis. Now, the set $\left\{Y|M_{m}| \leq \delta / \eta, Y |\chi_{m}| > 1 /\eta\right\}$ is contained in
$$
 \left\{|\chi_{m}|/|M_{m}| > 1 /\delta\right\}=\left\{|\psi(\epsilon_{m+1}^{(Y)}, \dots, \epsilon_{m+s}^{(Y)},\sign(M_{m}))| > 1 /\delta\right\}.
$$
For $0<\delta<\delta_0(\eta)$ the probability of this event gets arbitrarily small for $\delta_0(\eta)$ small enough.
\end{proof}
\begin{corollary}\label{makesmainproofeasier}
Let $m, n\geq 0$ and $f$ be a bounded uniformly continuous function on $\mathbb{R}^{n+1}$ with $f(0,\ldots) = 0$. For any $\epsilon > 0$ there is $\delta_0(\epsilon) > 0$ such that for $x$ large enough 
 \begin{equation}\label{Cor4.3}
E\left(f\left(\frac{Y_{-m}}{x},\ldots,\frac{Y_{-m+n}}{x}\right)\cdot \mathds{1}_{\{|Y_{0}| \leq \delta x\}} \; \vline \; |X_{0}| > x\right) < \epsilon
\end{equation}
for all $0<\delta<\delta_0(\epsilon)$.
\end{corollary}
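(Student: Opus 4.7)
My plan is to leverage uniform continuity of $f$ with $f(0,\ldots,0)=0$ to reduce the statement to a collection of probability bounds, each of which is handled either by Lemma~\ref{etadeltalemma} (for backward indices) or by an analogue derived via stationarity (for forward indices).

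First, given $\epsilon>0$, I will pick $\eta>0$ so that $|f(u)|\le \epsilon/2$ whenever $\|u\|_\infty\le\eta$. Splitting the expectation according to whether $\max_{0\le i\le n}|Y_{-m+i}|\le \eta x$ or not, the integrand is pointwise bounded by $\epsilon/2$ on the first event. On the second event I will apply the trivial bound $|f|\le \|f\|_\infty$ together with a union bound, giving
\[
\|f\|_\infty \sum_{i=0}^{n} P\bigl(|Y_{-m+i}|>\eta x,\;|Y_0|\le \delta x\bigm||X_0|>x\bigr),
\]
so it suffices to make each summand arbitrarily small by taking $\delta$ small. For each $i$ with $-m+i\le 0$ this is exactly Lemma~\ref{etadeltalemma} applied with $m-i$ in place of $m$.

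For indices $k:=-m+i>0$ I will mimic the proof of Lemma~\ref{etadeltalemma}: by stationarity the summand equals
\[
P\!\left(\tfrac{|Y_{-k}|}{\eta x}\le \tfrac{\delta}{\eta},\;\tfrac{|X_{-k}|}{\eta x}>\tfrac{1}{\eta}\;\middle|\;|Y_0|>\eta x\right)\cdot \tfrac{P(|Y_0|>\eta x)}{P(|X_0|>x)},
\]
whose second factor tends to $C^{-1}\eta^{-\alpha}$ by Condition~1. For the first factor I will combine Proposition~\ref{segers2}, which yields the limit of $Y_{-k}/|Y_0|$ under $|Y_0|>\eta x$ as $M_{-k}$, with Condition~2.b applied via the continuous mapping theorem to $X_{-k}/(\eta x)=\Psi(Y_{-k},\epsilon_{-k+1},\ldots,\epsilon_{-k+s})/(\eta x)$; the joint limit is $(Y|M_{-k}|,\,Y\,|M_{-k}\,\psi(\cdot,\sign(M_{-k}))|)$ for the appropriate innovations driving $\psi$. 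As in the proof of Lemma~\ref{etadeltalemma}, this event is contained in $\{|\psi|>1/\delta\}$, whose probability vanishes as $\delta\searrow 0$.

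The main technical hurdle is the joint limit used in the forward-index step: Proposition~\ref{segers2} as stated describes only the $Y$-coordinates, not the innovations $\epsilon_{-k+1},\ldots,\epsilon_{-k+s}$ that enter $X_{-k}$. For innovations at times in $\{1,\ldots,s\}$ (possible only when $k<s$) this is covered by Proposition~\ref{segers1}; for innovations at times $\le 0$ one uses that past innovations are, by the Markov construction and the independence assumption on $(\epsilon_t)$, asymptotically independent of the limiting backward tail chain, so their joint law under the conditioning converges to the product of the backward tail-chain limit with i.i.d.\ copies of $\epsilon_1$. Setting $\delta_0(\epsilon)$ to the minimum of the $n+1$ individual thresholds then yields the claim.
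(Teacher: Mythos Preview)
You have misread the hypothesis. The condition ``$f(0,\ldots)=0$'' in the corollary does \emph{not} mean $f(\mathbf{0})=0$; it means $f(0,y_{-m+1},\ldots,y_{-m+n})=0$ for all values of the remaining coordinates, i.e.\ $f$ vanishes on the entire hyperplane $\{y_{-m}=0\}$. This is evident from how the corollary is invoked in the proof of Proposition~\ref{mainprop} (``assume that $f(x,\ldots)=0$ as soon as $x=0$'') and from the paper's own proof of the corollary. With this stronger hypothesis, uniform continuity gives $|f(y_{-m},\ldots)|<\epsilon'$ whenever $|y_{-m}|\le\eta$, regardless of the other coordinates. The paper therefore splits only according to $\{|Y_{-m}|>\eta x\}$ versus $\{|Y_{-m}|\le\eta x\}$; a \emph{single} application of Lemma~\ref{etadeltalemma} (at index $-m$) handles the first piece, and the second piece is bounded pointwise. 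No forward indices enter at all.

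Your route, prompted by the weaker reading $f(\mathbf{0})=0$, forces you to control $P(|Y_k|>\eta x,\,|Y_0|\le\delta x\mid|X_0|>x)$ for $k>0$, and here there is a real gap. After the time shift you need the joint limit of $(Y_{-k}/y,\epsilon_{-k+1},\ldots,\epsilon_{-k+s})$ under $\{|Y_0|>y\}$. Propositions~\ref{segers1} and~\ref{segers2} do not provide this: the former covers only innovations at times $\ge 1$, and the latter covers only the $Y$-coordinates. Your claim that past innovations are ``asymptotically independent of the limiting backward tail chain'' is unsupported; since $Y_0=\Phi(Y_{-1},\epsilon_0)$, the law of $\epsilon_0$ (and earlier innovations) under $\{|Y_0|>y\}$ is genuinely affected by the conditioning. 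The paper itself flags this as nontrivial (cf.\ Remark~3.4, where such joint convergence is shown only under the extra assumption $P(M_{-1}=0)=0$). So even as an alternative argument under the weaker hypothesis, your forward-index step does not go through as written.
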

\begin{proof}
Since $f$ is bounded and uniformly continuous with $f(0,\ldots) = 0$, there is some $\eta >0$ such that
 \begin{equation*}
||f||_{\infty}\cdot \eta + \sup\{|f(y_{-m},\ldots,y_{-m+n})| \mbox{ with }|y_{-m}| \leq \eta \} < \epsilon.
\end{equation*}
Choose $\delta_0$ as in Lemma \ref{etadeltalemma}. For $\delta<\delta_0$ split the expected value in \eqref{Cor4.3} into two by splitting $\mathds{1}_{\{|Y_{0}| \leq \delta x\}}$ into 
$$\mathds{1}_{\{|Y_{-m}|> \eta x, |Y_{0}| \leq \delta x\}} + \mathds{1}_{\{|Y_{-m}|\leq \eta x,|Y_{0}| \leq \delta x\}}.$$ 
The first expected value is bounded by $||f||_{\infty}\cdot \eta$ by Lemma~\ref{etadeltalemma}, and the second by  
$$\sup\{|f(y_{-m},\ldots,y_{-m+n})| \mbox{ with } |y_{-m}| \leq \eta \}.$$
\end{proof}
\begin{proof}[Proof of Proposition \ref{mainprop}]
Note that the case $m=0$ and $n \geq 0$ is analogous to the proof of Proposition \ref{segers1} (cf.\ \cite[Theorem 2.3]{segers}). Since $(\epsilon_{s+1},\epsilon_{s+2}, \dots)$ is independent of $(X_0, Y_{0}, \dots,Y_{s})$ the continuous mapping theorem can be applied to derive \eqref{E:mainprop} and leads to the multiplicative structure with independent increments. 

Let now $m\geq 1$ and $n\geq 0$, and let us assume that Proposition \ref{mainprop} holds for $(Y^{(X)}_{-m+1}, \dots, Y^{(X)}_{s+n})$. Let $f:\mathbb{R}^{s+m+n+1} \to \mathbb{R}$ be bounded and uniformly continuous. We will show that
\begin{equation}\label{whattoshow}
\lim_{x \to \infty} E\left(f\left(\frac{Y_{-m}}{x}, \dots, \frac{Y_{s+n}}{x}\right)\;\middle\vert\; |X_0|>x\right)
 =E\left(f(Y_{-m}^{(X)}, \dots, Y_{s+n}^{(X)})\right) 
\end{equation}
with $(Y_{-m}^{(X)}, \dots, Y_{s+n}^{(X)})$ as defined in the statement of the proposition. Let us further assume that $f(x, \ldots)=0$ as soon as $x=0$. Note that an arbitrary function $f:\mathbb{R}^{s+m+n+1} \to \mathbb{R}$ can be split up additively into two functions $f_1$ and $f_2$ with
\begin{eqnarray*}
f_1(x_{-m}, \dots, x_{s+n})&=&f(0,x_{-m+1}, \dots, x_{s+n}),\\
f_2(x_{-m}, \dots, x_{s+n})&=&f(x_{-m}, \dots, x_{s+n})-f_1(x_{-m}, \dots, x_{s+n}),
\end{eqnarray*}
such that the second function satisfies the aforementioned assumption and the first function depends merely on $(x_{-m+1}, \dots,$ $x_{s+n})$. Since the induction hypothesis implies that \eqref{whattoshow} is satisfied by a function of $(x_{-m+1}, \dots, x_{s+n})$ the assumption about the structure of $f$ is no loss of generality.

The idea of the proof is to substitute the condition $\{|X_0|>x\}$ by a corresponding event in~$(Y_t)_{t \in \mathbb{Z}}$.
Let $\epsilon>0$. Then, for $x$ large enough
\begin{eqnarray*}
&&  \left|E\left(f\left(\frac{Y_{-m}}{x}, \dots, \frac{Y_{s+n}}{x}\right)\;\middle\vert\; |X_0|>x\right)\right. \\ 
&& \left. -E\left(f\left(\frac{Y_{-m}}{x}, \dots, \frac{Y_{s+n}}{x}\right)\mathds{1}_{\{|Y_0|>\delta x\}}\;\middle\vert\; |X_0|>x \right)\right|<\epsilon,
\end{eqnarray*}
for all $0<\delta<\delta_0(\epsilon)$, where $\delta_0(\epsilon)$ is chosen according to Corollary \ref{makesmainproofeasier}. We have
\begin{eqnarray*}
&& E\left(f\left(\frac{Y_{-m}}{x}, \dots, \frac{Y_{s+n}}{x}\right)\mathds{1}_{\{|Y_{0}|>\delta x\}}\;\middle\vert\; |X_0|>x\right)\\
% &=& \frac{E\left(f\left(\frac{Y_{-m}}{x}, \dots, \frac{Y_0}{x}, \dots, \frac{Y_{s+n}}{x}\right)\mathds{1}_{\{|X_0|>x\}}\mathds{1}_{\{|Y_{0}|>\delta x\}}\right)}{P(|X_0|>x)}\\
% &\overset{\mbox{\scriptsize Cond. 1}}{=}& \frac{\delta^{-\alpha}}{C}\lim_{x \to \infty}\frac{E\left(f\left(\frac{Y_{-s_--m}}{x}, \dots, \frac{Y_0}{x}, \dots, \frac{Y_{s_++n}}{x}\right)\mathds{1}_{\{|X_0|>x\}}\mathds{1}_{\{|Y_{-s_--1}|>\delta x\}}\right)}{P(|Y_{-s_--1}|>\delta x)}\\
&=& \frac{P(|Y_{0}|>\delta x)}{P(|X_0|>x)}E\left(f\left(\frac{Y_{-m}}{x}, \dots, \frac{Y_{s+n}}{x}\right) \mathds{1}_{\{|X_0|>x\}}\;\bigg\vert\; |Y_{0}|>\delta x \right)\\
% &=& \frac{P(|Y_{0}|>\delta x)}{P(|X_0|>x)} E\left(f\left(\frac{Y_{-m+1}}{x}, \dots, \frac{Y_{0}}{x}, \dots, \frac{Y_{s+n+1}}{x}\right)\right.\\
% && \hspace{4cm} \mathds{1}_{\{|X_{1}|>x\}}
% \;\bigg\vert\; |Y_1|>\delta x\bigg)\\
&=& \frac{P(|Y_{0}|>y)}{P(|X_0|>y/\delta)}E\left(f\left(\delta \frac{Y_{-m}}{y}, \dots, \delta \frac{Y_{s+n}}{y}\right) \mathds{1}_{\{\delta \cdot |\Psi(Y_{0}, \epsilon_1, \dots, \epsilon_{s})|>y\}}
\;\bigg\vert\; |Y_0|>y\right)
\end{eqnarray*}
with the substitution $y=\delta x$. Here, the first factor converges by Condition~1. Furthermore, an application of the continuous mapping theorem in connection with Propositions~\ref{segers1} and~\ref{segers2} yields that the whole expression converges to
\begin{equation*}
\frac{\delta^{-\alpha}}{C}E\left(f\left(\delta Y M_{-m}, \dots, \delta Y M_{s+n}\right) \mathds{1}_{\{\delta \cdot |Y M_{0} \psi(\epsilon_1^{(Y)}, \dots, \epsilon_{s}^{(Y)})|>1\}}
\right)\end{equation*}
with $Y,$ $\epsilon_i^{(Y)}, i \in \mathbb{N},$ and $M_n, n \in \mathbb{Z},$ as in Propositions~\ref{segers1} and~\ref{segers2}. Defining new variables $(\widetilde{A}_{-m}, \widetilde{B}_{-m})$ with the same distribution as $(A_{-m}, B_{-m})$ in the statement of the proposition and independent of $Y, \epsilon_1^{(Y)}, \dots, \epsilon_{s}^{(Y)},$ $M_{-m+1}, \dots,$ $M_{s+n}$, $(Y_t)_{t \in \mathbb{Z}}, (X_t)_{t \in \mathbb{Z}}$ the above expression equals
$$
\frac{\delta^{-\alpha}}{C}E\biggl(f\left(h(\delta Y M_{-m+1}, \widetilde{A}_{-m}, \widetilde{B}_{-m}),\dots,\delta Y M_{s+n}\right)\mathds{1}_{\{\delta \cdot |Y M_{0} \psi(\epsilon_1^{(Y)}, \dots, \epsilon_{s}^{(Y)})|>1\}}
\biggr)$$
by the definition of $M_{-m}$. Next, note that by the continuous mapping theorem this equals
$$
\lim_{y \to \infty}\frac{\delta^{-\alpha}}{C}E\biggl(f\left(h\left(\delta \frac{Y_{-m+1}}{y}, \widetilde{A}_{-m}, \widetilde{B}_{-m}\right),\dots, \delta \frac{Y_{s+n}}{y} \right)\mathds{1}_{\{\delta \cdot \left| X_{0}\right|>y\}}
\;\Biggr\vert\; |Y_0|>y \Biggr).$$
Replacing $y$ by $\delta x$ and again using Condition~1 this becomes
$$
\lim_{x \to \infty}E\Biggl(f\left(h\left(\frac{Y_{-m+1}}{x}, \widetilde{A}_{-m}, \widetilde{B}_{-m}\right), \dots, \frac{Y_{s+n}}{x}\right) \mathds{1}_{\{|Y_{0}|> \delta x\}}
\;\Biggr\vert\;|X_0|>x  \Biggr). $$
Since both $h$ and $f$ are uniformly continuous with $f(0, \dots)=0$ and $h(0, \dots)=0$ this gives
\begin{eqnarray*}&&\lim_{x \to \infty}E\Biggl(f\left(h\left(\frac{Y_{-m+1}}{x}, \widetilde{A}_{-m},\widetilde{B}_{-m}\right),\dots, \frac{Y_{s+n}}{x} \right)\mathds{1}_{\{|Y_{0}|\leq \delta x\}}
\;\Biggr\vert\;|X_0|>x  \Biggr) \\
&=& \lim_{x \to \infty}E\Biggl(g\left(\frac{Y_{-m+1}}{x},\dots, \frac{Y_{s+n}}{x} \right)\mathds{1}_{\{|Y_{0}|\leq \delta x\}}
\;\Biggr\vert\;|X_0|>x  \Biggr)
\end{eqnarray*}
for the complementary expression, where
$$ g(y_{-m+1}, \ldots, y_{s+n}):=E(f(h(y_{-m+1},\widetilde{A}_{-m},\widetilde{B}_{-m}),\ldots, y_{s+n})) $$
with $g(0, \ldots)=0$. We may thus conclude from Corollary \ref{makesmainproofeasier} that
$$ \lim_{x \to \infty}E\Biggl(f\left(h\left(\frac{Y_{-m+1}}{x}, \widetilde{A}_{-m},\widetilde{B}_{-m}\right),\dots, \frac{Y_{s+n}}{x} \right)\mathds{1}_{\{|Y_{0}|\leq \delta x\}}
\;\Biggr\vert\;|X_0|>x  \Biggr) $$
tends to 0 as $\delta \to 0$. Thus, 
\begin{eqnarray*}
 && \lim_{x \to \infty} E\left(f\left(\frac{Y_{-m}}{x}, \dots, \frac{Y_{s+n}}{x}\right)\;\middle\vert\; |X_0|>x\right) \\
 &=& \lim_{x \to \infty}E\Biggl(f\left(h\left(\frac{Y_{-m+1}}{x}, \widetilde{A}_{-m}, \widetilde{B}_{-m}\right), \dots,\frac{Y_{s+n}}{x}\right)  \;\Biggr\vert\;|X_0|>x  \Biggr).
\end{eqnarray*}
An application of the continuous mapping theorem in connection with the induction hypothesis yields that the latter expression equals
$$ E\Biggl(f\left(h\left(Y_{-m+1}^{(X)}, A_{-m}, B_{-m}\right), \dots, Y_{s+n}^{(X)}\right)\Biggr), $$
with $(A_{-m}, B_{-m})$ as in the statement of the proposition. Since $Y_{-m}^{(X)}$\linebreak  $=h\left(Y_{-m+1}^{(X)}, A_{-m}, B_{-m}\right)$ this finishes the proof.
\end{proof}
\begin{remark}\label{hasnottobeunique} If $(\hat{Y}^{(X)}_0, \dots, \hat{Y}^{(X)}_{s})$ is a random vector such that for a sequence $(x_n)_{n \in \mathbb{N}}$ with $x_n\to \infty$ the relation 
$$ \lim_{n \to \infty} \mathcal{L}\left(\frac{Y_0}{x_n}, \dots, \frac{Y_{s}}{x_n}\;\middle\vert\; |X_0|>x_n\right)=\mathcal{L}(\hat{Y}^{(X)}_0, \dots, \hat{Y}^{(X)}_{s})$$
holds instead of Condition~3 then a statement
analogous to Proposition \ref{mainprop} holds true along the sequence $(x_n)_{n \in \mathbb{N}}$. The existence of such sequences is guaranteed by Condition~1, cf.\ Proposition \ref{sequenceistight}.
\end{remark}
\begin{remark}
In order to simplify notation (using only $s$ instead of $s_-$ and $s_+$), we have assumed that \eqref{X} holds instead of \eqref{Xlonger}. However, under assumption \eqref{Xlonger} the statement of Proposition \ref{mainprop} looks very similar, cf.\ \cite{janssen}, Theorem 3.5.2, for details.
\end{remark}

\section{Multivariate Regular Variation}\label{MRV}
In this chapter we will show that Condition~3 is closely related to the theory of multivariate regular variation. In a time series context this property is well explored in the case of GARCH$(p,q)$ processes, cf.\ \cite{GARCH}. 

From the equivalent definitions of multivariate regular variation given in the literature we shall refer to the one used in \cite{segers}. Recall that a measurable function $U: \mathbb{R}_+ \to \mathbb{R}_+$ is said to be univariate regularly varying with index $\alpha \in \mathbb{R}$ if $\lim_{x \to \infty}U(\tau x)/U(x)=\tau^\alpha$ for all $\tau>0$. We call a random vector $\mathbf{Z} \in \mathbb{R}^d$ multivariate regularly varying if there exists a univariate regularly varying function $U:\mathbb{R}_+\to\mathbb{R}_+$ with index $-\alpha$ and a non-degenerate, non-zero Radon measure $\nu$ on $\mathbb{E}=[-\infty,\infty]^d\setminus\{\mathbf{0}\}$ such that
\begin{equation}\label{regularvariation1}   P\left(\mathbf{Z}\in x \cdot\right)/U(x)\overset{v}{\Rightarrow}\nu(\cdot), \qquad x\to \infty, \end{equation}
where ``$\overset{v}{\Rightarrow}$'' stands for vague convergence (cf.\ \cite{phenomena}) in $M_+(\mathbb{E})$, the space of all nonnegative Radon measures on $\mathbb{E}$. One can show that the limit measure~$\nu$ is necessarily homogeneous, i.e.\ that $\nu (x A) = x^{-\alpha} \nu (A)$ holds for all $x>0$ and for all Borel sets $A \subset \mathbb{E}$ (cf.\ \cite{phenomena}).  The measure $\nu$ and, consequently, the extremal behavior of~$\mathbf{Z}$ are thus completely described by the index $\alpha$ of regular variation, a constant $c>0$ and a probability measure $S$ on $\mathbb{S}^{d-1}:=\{x \in \mathbb{R}^d| \| x \|=1\}$. The latter is the so-called spectral measure. Altogether, we have that
 $$\nu\left( \left\{x \in \mathbb{E}: \|x\|>a, \frac{x}{\|x\|} \in \cdot \right\}\right)=c \cdot a^{-\alpha}\cdot S(\cdot) $$
holds for all $a>0$ (cf.\ \cite{phenomena}).  

% Excluding the zero implies that the compact sets in $\mathbb{C}$ are those which are bounded away from $\mathbf{0}$ (cf.\ \cite{phenomena}, Proposition 6.1), so if we want to prove multivariate regular variation of a random vector $\mathbf{Y}$ on $\mathbb{C}$ we have to check if the relation
% $$ \lim_{x \to \infty} \frac{P(\mathbf{Y} \in xA)}{U(x)}=\nu(A) $$
% holds for all $A \subset \mathbb{C}$ bounded away from $\mathbf{0}$ with $\nu(\partial A)=0$.     
% meaning that \eqref{regularvariation1} is equivalent to
% \begin{equation}\label{regularvariation2} P\left(\mathbf{Y}\in t A\right)/U(z)\to\nu(A), \qquad z\to \infty,\end{equation}
% for all sets $A\subset\mathbf{C}$ bounded away from the origin with $\nu(\partial A)=0$.

It has been shown by \cite{GARCH} and \cite{boman} (cf.\ also \cite{handbook}) that under mild assumptions about the distribution of $\epsilon_t, t \in \mathbb{Z},$ a stationary GARCH$(p,q)$ process is multivariate regularly varying, i.e.\ for $m,n \geq 0$ the vector $\mathbf{Z}=(\sigma_{-m}^2,\zeta_{-m}^2, \dots, \sigma_n^2, \zeta_n^2)$ with $\sigma_t$ and $\zeta_t$ as defined in \eqref{GARCHX} and \eqref{GARCHsigma} satisfies~\eqref{regularvariation1}. Furthermore, one can easily show that the same holds for the vector~$(\sigma_{-m},|\zeta_{-m}|, \dots, \sigma_n, |\zeta_n|)$. Now, the fact that a certain vector derived from the processes $(Y_t)_{t \in \mathbb{Z}}$ and $(X_t)_{t \in \mathbb{Z}}$ is multivariate regularly varying will be useful in the verification of Condition~3 as is shown in the following. 

Let us again assume that $(Y_t, X_t)_{t \in \mathbb{Z}}$ is stationary and given by \eqref{Y} and \eqref{X}. Note that Condition~3 is equivalent to
\begin{eqnarray}\nonumber && \lim_{x \to \infty} P\left(\left(\frac{Y_0}{x}, \dots, \frac{Y_s}{x}\right) \in A \;\middle\vert\; |X_0|>x\right)\\
&=&\label{lookslikeRV} \lim_{x \to \infty} \frac{P\left(\left(\frac{Y_0}{x}, \dots, \frac{Y_{s}}{x}\right) \in A, |X_0|>x\right)}{P(|X_0|>x)}\\
\nonumber &=& P\left( \left( Y_{0}^{(X)}, \dots, Y_{s}^{(X)}\right)\in A\right) 
\end{eqnarray}
for a random vector $(Y_{0}^{(X)}, \dots, Y_{s}^{(X)})$ and for all $A \in {\mathbb{B}}^{s+1}$ such that \linebreak $P((Y_{0}^{(X)}, \dots, Y_{s}^{(X)}) \in \partial A)=0$. 

In the following
we will assume multivariate regular variation of \linebreak $(|X_0|, Y_{0}, \dots, Y_{s})$ on $\mathbb{C}=\left(\bar{\mathbb{R}}_{+,0}\times\bar{\mathbb{R}}^{s+1}\right)\setminus\{\mathbf{0}\}$,
and show how this concept
relates strongly to Condition~3. 
By continuity from below it suffices to look at such~$A$ which are bounded away from $\mathbf{0}$ in order to derive Condition~3 from \eqref{lookslikeRV}. The assumption of multivariate regular variation of $(|X_0|, Y_{0}, \dots, Y_{s})$ guarantees the existence of a function $U:\mathbb{R}_+\to\mathbb{R}_+$ such that
\begin{eqnarray}
&& \nonumber \lim_{x \to \infty} \frac{P\left(|X_0|>x,\left(\frac{Y_{0}}{x}, \dots, \frac{Y_{s}}{x}\right) \in A\right)}{P(|X_0|>x)}\\
&=& \nonumber \lim_{x \to \infty} \frac{P\left(\left(\frac{Y_0}{x}, \dots, \frac{Y_{s}}{x}\right) \in A, |X_0|>x\right)}{U(x)}\frac{U(x)}{P(|X_0|>x)}\\
&=& \label{3.3followsfromRValmost} \frac{\nu((1, \infty) \times A)}{\nu((1, \infty) \times \mathbb{R}^{s+1})} 
\end{eqnarray}
if the denominator is positive (it is necessarily finite since $(1, \infty) \times \mathbb{R}^{s+1}$ is bounded away from the origin). One easily checks that \eqref{3.3followsfromRValmost} defines a probability measure for $A \in \mathbb{B}^{s+1}$ and may be set as the law of the random vector $(Y_{0}^{(X)}, \dots, Y_{s}^{(X)})$ if $\nu((1,\infty)\times \mathbb{R}^{s+1})>0$. Because of the aforementioned homogeneity of $\nu$ we note the equivalence
\begin{equation}\label{hyperplane} \nu((1,\infty)\times \mathbb{R}^{s+1})=0 \Leftrightarrow \nu((\delta,\infty)\times \mathbb{R}^{s+1})=0 \,\,\, \forall \, \delta>0. \end{equation}
Thus, $\nu((1,\infty)\times \mathbb{R}^{s+1})=0$ implies that the mass of $\nu$ is concentrated on the hyperplane $\{0\} \times \mathbb{R}^{s+1}$. Note that this is not excluded by our definition of regular variation. Nevertheless, since $\nu$ is non-degenerate and since the process $(Y_t)_{t \in \mathbb{Z}}$ is stationary we find that 
$$ \nu((1,\infty)\times \mathbb{R}^{s+1})=0 \Rightarrow \lim_{x \to \infty} \frac{P(|Y_0|>x)}{U(x)}> 0.$$
On the other hand, $\nu((1,\infty)\times \mathbb{R}^{s+1})=0$ implies that
$$  \lim_{x \to \infty} \frac{P(|X_0|>x)}{U(x)}=0. $$
Hence, $\nu((1,\infty)\times \mathbb{R}^{s+1})=0$ entails that $|X_0|$ and $|Y_0|$ are not tail equivalent. 
This contradicts Condition~1 and leads to the following proposition.
\begin{proposition}\label{mrvandcond3} Let $(|X_0|, Y_{0}, \dots, Y_{s}) \in \mathbb{R}_{+,0} \times \mathbb{R}^{s+1}$ be a multivariate regularly varying vector with index $\alpha$ and let Condition~1 hold. Then Condition~3 is satisfied. 
\end{proposition}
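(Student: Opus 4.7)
The strategy is exactly the one sketched in the paragraphs immediately preceding the proposition: apply the multivariate regular variation directly in both the numerator and denominator of the conditional probability from Condition~3, take vague limits, and conclude by the Portmanteau theorem. Let $U:\mathbb{R}_+\to\mathbb{R}_+$ and $\nu$ be the scaling function and limit Radon measure on $\mathbb{C}=(\bar{\mathbb{R}}_{+,0}\times \bar{\mathbb{R}}^{s+1})\setminus\{\mathbf{0}\}$ associated with the regular variation of $(|X_0|, Y_0, \dots, Y_s)$. For any Borel set $A\subset \mathbb{R}^{s+1}$ such that $(1,\infty)\times A$ is a continuity set of $\nu$, this set is relatively compact in $\mathbb{C}$ and bounded away from $\mathbf{0}$, so vague convergence yields
\begin{equation*}
\frac{P\left((Y_0,\dots,Y_s)\in xA,\;|X_0|>x\right)}{U(x)} \longrightarrow \nu((1,\infty)\times A),
\end{equation*}
and likewise with $A$ replaced by $\mathbb{R}^{s+1}$. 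Dividing one limit by the other gives exactly the ratio in \eqref{3.3followsfromRValmost}, provided the denominator is strictly positive.

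The main obstacle is to rule out $\nu((1,\infty)\times \mathbb{R}^{s+1})=0$. Assuming this equality for contradiction, the homogeneity of $\nu$ combined with \eqref{hyperplane} forces the entire mass of $\nu$ onto the hyperplane $\{0\}\times\mathbb{R}^{s+1}$. Since $\nu$ is non-zero and non-degenerate, the marginal of $\nu$ on some $Y_i$-coordinate must be non-trivial, which implies $\liminf_{x\to\infty}P(|Y_i|>x)/U(x)>0$; by stationarity of $(Y_t)_{t\in\mathbb{Z}}$ the same holds for $i=0$. On the other hand, concentration of $\nu$ on $\{0\}\times\mathbb{R}^{s+1}$ means $P(|X_0|>x)/U(x)\to 0$. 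These two conclusions contradict the tail equivalence $P(|X_0|>x)/P(|Y_0|>x)\to C>0$ from Condition~1. Hence $\nu((1,\infty)\times \mathbb{R}^{s+1})>0$.

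With strict positivity of the denominator in hand, define
\begin{equation*}
Q(A):=\frac{\nu((1,\infty)\times A)}{\nu((1,\infty)\times \mathbb{R}^{s+1})},\qquad A\in\mathbb{B}^{s+1}.
\end{equation*}
A routine check shows that $Q$ is a probability measure on $\mathbb{B}^{s+1}$, and the computation of the previous paragraph yields
\begin{equation*}
\lim_{x\to\infty}P\!\left(\left(\tfrac{Y_0}{x},\dots,\tfrac{Y_s}{x}\right)\in A\;\middle\vert\;|X_0|>x\right)=Q(A)
\end{equation*}
for every $A$ with $Q(\partial A)=0$. By the Portmanteau theorem, this convergence on continuity sets upgrades to weak convergence, which is exactly Condition~3 with $\mathcal{L}(Y_0^{(X)},\dots,Y_s^{(X)})=Q$. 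Beyond the contradiction argument just described, the remainder of the proof is a standard use of vague convergence and the Portmanteau theorem.
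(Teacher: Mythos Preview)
Your proof is correct and follows essentially the same approach as the paper: both use the vague-limit ratio \eqref{3.3followsfromRValmost}, rule out $\nu((1,\infty)\times\mathbb{R}^{s+1})=0$ via homogeneity and the tail-equivalence contradiction with Condition~1, and then identify the limit law as $A\mapsto \nu((1,\infty)\times A)/\nu((1,\infty)\times\mathbb{R}^{s+1})$. The only cosmetic difference is that you invoke Portmanteau explicitly where the paper appeals to continuity from below.
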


\section{Application to GARCH$(1,1)$ Processes}\label{GARCH}
In this section we will apply Proposition \ref{mainprop} to the special case of a GARCH$(1,1)$ process. We suggest a simple way for simulation from the limiting distribution 
\begin{equation}\label{eq:ldx}
\lim_{x\to\infty} \mathcal L\left( \frac{\zeta_{-m}}{x},\ldots, \frac{\zeta_n}{x} \;\middle\vert\;  \zeta_0 > x\right), \;\;\; m,n \in \mathbb{N}_0,
\end{equation} 
of the tail process of $(\zeta_t)_{t \in \mathbb{Z}}$, where $(\zeta_t,\sigma_t)_{t \in \mathbb{Z}}$ are given by \eqref{GARCHX} and \eqref{GARCHsigma}. To this end, we shall make use of Proposition \ref{mainprop} and initially focus on the tail chain of the volatility sequence $(\sigma_t)_{t \in \mathbb{Z}}$ conditioned on the event $\{|\zeta_0|> x\}$. We will then obtain the desired distribution in \eqref{eq:ldx} by the special structure of $(\zeta_t)_{t \in \mathbb{Z}}$. We shall henceforth assume that $\alpha_0>0$ in order to preclude a degenerate solution to~(\ref{GARCHsigma}). Let further $\alpha_1 >0$, $\beta_1 \ge 0, \alpha_1 +
\beta_1 < 1$, and let $\epsilon_t, t \in \mathbb{Z},$ i.i.d.\ standard normal such that there exists a strictly stationary process which satisfies the aforementioned definitions, cf.~\cite{nelson}. Recall that the case $\beta_1 =0$ corresponds to the ARCH(1) model.

It is well known that the marginal distributions of a stationary \linebreak GARCH$(p,q)$ process with standard normal innovations show a regularly varying behavior. In the case of a GARCH$(1,1)$ process there exists a particularly simple characterization of the corresponding index~$\alpha$ of regular variation of the squared processes $(\zeta_t^2)_{t \in \mathbb{Z}}$ and $(\sigma_t^2)_{t \in \mathbb{Z}}$ in terms of the unique positive solution to
\begin{equation}\label{alphaequation}
E\left( \left[\alpha_1 \epsilon_0^2 +\beta_1\right]^{\alpha}\right)=1
\end{equation}
(cf.\ \cite{handbook}, Theorem 1 and Example 2). 

In order to apply Proposition \ref{mainprop} we verify that Conditions~1 to 3 are satisfied for a GARCH$(1,1)$ process. It follows from the aforementioned regular variation of the marginal distribution of $\sigma_0^2$ (and hence also of $|\sigma_0|$) that
$$ \lim_{x \to \infty}\frac{P(|\sigma_0|>ux)}{P(|\sigma_0|>x)}=u^{-2\alpha}, \;\;\; \mbox{ for all } u>0. $$
Since $\epsilon_1$ is independent of $\sigma_0$, and all moments of $|\epsilon_1|$ exist we may apply Breiman's Theorem (cf.\ \cite{breiman}) in order to find that
\begin{equation}\label{Breiman} \lim_{x \to \infty} \frac{P(|\zeta_0|>x)}{P(|\sigma_0|>x)}=\lim_{x \to \infty} \frac{P(|\sigma_0 \epsilon_1|>x)}{P(|\sigma_0|>x)}=E(|\epsilon_1|^{2 \alpha})\end{equation} 
and Condition 1 is satisfied. By the specifications of $\Phi$ and $\Psi$ given in \eqref{GARCHX} and \eqref{GARCHsigma} we get that
\begin{equation}\label{eq:pz}
\lim_{x\to\infty} x^{-1} \Phi(x,e) = \phi(e) = \sqrt{\alpha_1 e^2 +\beta_1}, \;\;\; \lim_{x\to\infty} x^{-1} \Psi(x,e) = \psi(e) = e,
\end{equation}
where both $\Phi$ and $\Psi$ are only defined for $x\geq 0$. This shows that Condition 2 holds where we dropped the second argument in $\phi$ and $\psi$ for simplicity. Concerning Condition 3 there are several instructive arguments: First, it is a direct consequence of Proposition \ref{mrvandcond3} given the multivariate regular variation of the vector $(|\zeta_0|,\sigma_0, \sigma_1)$. Alternatively, it follows from an application of Proposition \ref{onehelpsforall} in connection with Lemma \ref{pointmass} using that $C=E(|\chi|^{2\alpha})$ by \eqref{Breiman}. Finally, the following lemma states both the existence and the specific distribution of $(\sigma_0^{(\zeta )}, \sigma_1^{(\zeta )})$.
\begin{lemma}\label{l:czv}
For the stationary processes $(\zeta_t)_{t \in \mathbb{Z}}$ and $(\sigma_t)_{t \in \mathbb{Z}}$ given by~(\ref{GARCHX})
and~(\ref{GARCHsigma}) there exists a random vector $(\zeta_0^{(\zeta )}, \epsilon_1^{(\zeta )} )$ such that
\begin{equation}\label{eq:czv}
\lim_{x\to\infty} \mathcal L\left( \frac{|\zeta_0|}{x},\epsilon_1,\frac{\sigma_0}{x},
  \frac{\sigma_1}{x}\;\middle\vert \; |\zeta_0|>x\right) = \mathcal L \left(|\zeta_0|^{(\zeta )}, \epsilon_1^{(\zeta )}, \sigma_0^{(\zeta )}, \sigma_1^{(\zeta )}\right),
\end{equation}
$$\mbox{with}\;\;\; \sigma_0^{(\zeta )}=\frac{|\zeta_0|^{(\zeta )}}{|\epsilon_1^{(\zeta )}|}, \;\; \sigma_1^{(\zeta )}=\frac{|\zeta_0|^{(\zeta )}}{|\epsilon_1^{(\zeta )}|}\phi(\epsilon_1^{(\zeta )}),$$ 
where $|\zeta_0|^{(\zeta )}$ and $\epsilon_1^{(\zeta )}$ are independent with $|\zeta_0|^{(\zeta )} \sim \mbox{Par}(2\alpha)$ and $\epsilon_1^{(\zeta )}$ is a symmetric random variable such that $(\epsilon_1^{(\zeta )})^2/2$ is Gamma distributed with shape parameter $\alpha+1/2$ and scale parameter 1. 
\end{lemma}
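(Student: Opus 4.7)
The plan is to first pin down the two-dimensional conditional limit $\mathcal{L}(|\zeta_0|/x, \epsilon_1 \mid |\zeta_0|>x)$, since both $\sigma_0/x$ and $\sigma_1/x$ are (asymptotically) continuous functions of $(|\zeta_0|/x, \epsilon_1)$. Indeed, from $\zeta_0 = \sigma_0 \epsilon_1$ with $\sigma_0 \geq 0$ one has $\sigma_0 = |\zeta_0|/|\epsilon_1|$, and from $\sigma_1^2 = \alpha_0 + (\alpha_1 \epsilon_1^2 + \beta_1)\sigma_0^2$ one gets $\sigma_1/x = \sqrt{\alpha_0/x^2 + (\alpha_1\epsilon_1^2 + \beta_1)(|\zeta_0|/(x|\epsilon_1|))^2}$. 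The corresponding maps converge uniformly on compact subsets of $\{e \neq 0\}$ as $x\to\infty$, so once the two-dimensional limit is identified and shown to put no mass on $\{e = 0\}$, the extended continuous mapping theorem assembles the full four-dimensional limit and produces $\sigma_0^{(\zeta)} = |\zeta_0|^{(\zeta)}/|\epsilon_1^{(\zeta)}|$ and $\sigma_1^{(\zeta)} = \sigma_0^{(\zeta)} \phi(\epsilon_1^{(\zeta)})$.

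For the two-dimensional limit I would exploit the independence of $\sigma_0$ and $\epsilon_1$ together with the regular variation of $\sigma_0$. For $y > 0$ and a Borel set $A \subset \mathbb{R}$,
\begin{align*}
P(|\zeta_0|/x > y, \epsilon_1 \in A \mid |\zeta_0| > x)
&= \frac{E[\mathbf{1}_A(\epsilon_1)\,P(\sigma_0 > xy/|\epsilon_1|)]}{P(\sigma_0 > x)} \cdot \frac{P(\sigma_0 > x)}{P(|\zeta_0| > x)}.
\end{align*}
The second factor converges to $C^{-1} = 1/E[|\epsilon_1|^{2\alpha}]$ by Breiman's theorem, cf.\ \eqref{Breiman}. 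Potter's bound, together with the fact that $|\epsilon_1|$ has all moments, justifies dominated convergence in the first factor and produces $y^{-2\alpha}E[\mathbf{1}_A(\epsilon_1)|\epsilon_1|^{2\alpha}]$. Therefore the limit law factorizes: $|\zeta_0|^{(\zeta)} \sim \mbox{Par}(2\alpha)$ is independent of $\epsilon_1^{(\zeta)}$, the latter having density $g(e) = |e|^{2\alpha} f_{\epsilon_1}(e)/E[|\epsilon_1|^{2\alpha}]$. In particular $g(0) = 0$, so the continuous mapping step goes through.

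Since $f_{\epsilon_1}$ is even so is $g$, yielding symmetry of $\epsilon_1^{(\zeta)}$. A straightforward density transformation then shows that $T = (\epsilon_1^{(\zeta)})^2/2$ has density
$$
f_T(t) = \frac{2^{\alpha+1/2}\, t^{\alpha-1/2} e^{-t}}{\sqrt{2\pi}\, E[|\epsilon_1|^{2\alpha}]}, \qquad t > 0,
$$
which, using the classical identity $E[|Z|^{2\alpha}] = 2^{\alpha}\Gamma(\alpha+1/2)/\sqrt{\pi}$ for $Z \sim N(0,1)$, reduces to the Gamma$(\alpha+1/2, 1)$ density. The main obstacle is the rigorous interchange of limit and expectation in the first factor above: Potter's bound combined with the Gaussian tail of $\epsilon_1$ handles this without difficulty, but it is the single step that needs the most care. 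Everything else is algebra and a routine application of the continuous mapping theorem, with the vanishing $\alpha_0/x^2$ contribution to $\sigma_1/x$ disposed of by uniform convergence on compacta away from $\{e = 0\}$.
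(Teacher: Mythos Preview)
Your proposal is correct and follows essentially the same route as the paper: first establish the two-dimensional limit $\mathcal{L}(|\zeta_0|/x,\epsilon_1\mid |\zeta_0|>x)\Rightarrow \mathcal{L}(|\zeta_0|^{(\zeta)},\epsilon_1^{(\zeta)})$ via the factorization through $P(\sigma_0>\cdot)$, identify the tilted Gaussian law for $\epsilon_1^{(\zeta)}$ and the Pareto for $|\zeta_0|^{(\zeta)}$, and then recover $\sigma_0^{(\zeta)},\sigma_1^{(\zeta)}$ by continuous mapping. The only technical difference is in the interchange of limit and expectation: the paper restricts to $\{|\epsilon_1|\le t\}$ and invokes the uniform convergence theorem for regularly varying functions with negative index (so the ratio $P(\sigma_0>sx/u)/P(\sigma_0>sx)$ converges uniformly for $u\in(0,t]$), whereas you use Potter's bound plus dominated convergence; both are standard and valid here since $\epsilon_1$ has all moments. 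One small point: your displayed identity for $P(|\zeta_0|/x>y,\epsilon_1\in A\mid |\zeta_0|>x)$ is only literally correct for $y\ge 1$, but since the limiting first coordinate is Pareto and hence $\ge 1$ a.s., this suffices to identify the weak limit.
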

\begin{proof}
We show that
$$ \lim_{x \to \infty}\mathcal L\left( \frac{|\zeta_0|}{x},\epsilon_1\;\middle\vert \; |\zeta_0|>x\right) = \mathcal L \left(|\zeta_0|^{(\zeta )}, \epsilon_1^{(\zeta )}\right),$$
where $\mathcal L \left(|\zeta_0|^{(\zeta )}, \epsilon_1^{(\zeta )}\right)$ is according to the above proposition. The statement then follows from an application of the continuous mapping theorem in connection with Condition 2. Now, for $s\geq 1, t \in \mathbb{R}$ and $v \in \{-1,1\}$ we have
\begin{eqnarray*}
&& \lim_{x \to \infty}P\left( \frac{|\zeta_0|}{x}>s,|\epsilon_1|\leq t, \sign(\epsilon_1)=v\;\middle\vert \; |\zeta_0|>x\right)\\
&=& \lim_{x \to \infty}\frac{P\left(  \frac{|\zeta_0|}{x}>s,|\epsilon_1|\leq t,\sign(\epsilon_1)=v,|\zeta_0|>x\right)}{P(|\zeta_0|>x)} \\
&=& \frac{s^{-2\alpha}}{E\left(|\epsilon_1|^{2\alpha}\right)}\lim_{x \to \infty}\frac{P\left(|\sigma_0\epsilon_1|>sx,|\epsilon_1|\leq t, \sign(\epsilon_1)=v\right)}{P(\sigma_0>sx)}\\
&=& \frac{s^{-2\alpha}}{E\left(|\epsilon_1|^{2\alpha}\right)}\lim_{x \to \infty}\int_0^t \frac{P(\sigma_0>sx/u)}{P(\sigma_0>sx)} \, F^{|\epsilon_1|}(du)P(\sign(\epsilon_1)=v)\\
&=& \frac{s^{-2\alpha}}{2E\left(|\epsilon_1|^{2\alpha}\right)}\int_0^t u^{2\alpha} \, F^{|\epsilon_1|}(du) \\
&=& s^{-2\alpha}\frac{1}{2} \int_0^{t^2/2}\frac{1}{\Gamma(\alpha+1/2)}y^{\alpha-1/2}e^{-y}\, dy,
\end{eqnarray*}
where the penultimate equality follows by uniform convergence of regularly varying functions, cf.\ \cite{bingham}. The last equality holds by substitution and using that $E(|\epsilon_1|^{2\alpha})=\pi^{-1/2}2^\alpha\Gamma(\alpha+1/2)$. Now, the final expression has product form and hence $|\zeta_0|^{(\zeta )}$, $|\epsilon_1^{(\zeta )}|$ and $\sign(\epsilon_1^{(\zeta )})$ are independent random variables with the stated distributions.
\end{proof}
Having checked that all three conditions are met we may now apply Proposition \ref{mainprop} to the GARCH$(1,1)$ setting.
\begin{proposition}\label{p:js}
Let the stationary time series $(\zeta_t)_{t \in \mathbb{Z}}$ and $(\sigma_t)_{t \in \mathbb{Z}}$ be given by~(\ref{GARCHX})
and~(\ref{GARCHsigma}). Then, for all $m,n\in\mathbb N_0$, as $x\to\infty$,
\begin{equation}
\label{eq:ff}
\mathcal L\left( \frac{\sigma_{-m}}{x},\ldots, \frac{\sigma_n}{x}\;\middle\vert
  \; |\zeta_0| > x\right) \to \mathcal L\left(\sigma_{-m}^{(\zeta )},\ldots,
  \sigma_{n}^{(\zeta )} \right)
\end{equation}
with $(\sigma_0^{(\zeta )},\sigma_1^{(\zeta )})$ as in Lemma~\ref{l:czv}, and 
\begin{equation}
\label{eq:n2}
\sigma_t^{(\zeta )} = \sigma_{t-1}^{(\zeta )} A_t,\quad t\geq 2, \quad\mbox{and}\quad
\sigma_{-t}^{(\zeta )} = \sigma_{-t+1}^{(\zeta )} A_{-t},
\quad t\geq 1, 
\end{equation}
with $A_t, t\geq 2,$ defined by
\begin{align}
A_t& =  \phi(\hat \epsilon_{t}) = \sqrt{\alpha_1 \hat \epsilon_{t}^2 +\beta_1} \label{eq:cs2}
\end{align}
for an i.i.d.\ sequence  $(\hat \epsilon_t)_{t\geq 2}$ independent of $(\sigma_0^{(\zeta )},\sigma_1^{(\zeta )})$ with standard normal distribution, and $A_{-t}, t \in \mathbb{N},$ are i.i.d.\ random variables on $(0, \beta_1^{-1/2})$ (i.e. on $(0, \infty)$ for $\beta_1=0$) independent
of~$(A_t)_{t\geq 2}$ and $(\sigma_0^{(\zeta )},\sigma_1^{(\zeta )})$ with distribution function
\begin{equation}
P(A_{-1}\le x) = \sqrt{2/\pi}\int_{\left(\frac{x^{-2}-\beta_1}{\alpha_1}\right)^{1/2}}^{\infty}
  (\alpha_1 z^2 + \beta_1)^{\alpha} \exp\left(-\frac12 z^2\right) dz\label{eq:cs3}
\end{equation}
for  $0<x<\beta_1^{-1/2}$.
\end{proposition}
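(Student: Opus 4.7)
The plan is to apply Proposition~\ref{mainprop} directly with $Y_t=\sigma_t$, $X_t=\zeta_t$, $s=1$, regular-variation index $2\alpha$ (since $\sigma_0^{2}$ has index $\alpha$), and the limit functions $\phi(e)=\sqrt{\alpha_1 e^{2}+\beta_1}$ and $\psi(e)=e$ read off from~\eqref{eq:pz}. Conditions~1 and~2 have already been verified in the paragraphs just before Lemma~\ref{l:czv}, and Condition~3 together with the joint starting law $\mathcal L(\sigma_0^{(\zeta)},\sigma_1^{(\zeta)})$ is supplied by Lemma~\ref{l:czv}. Because $\sigma_t\ge 0$, the sign variable $M_0$ in the back-and-forth tail chain is identically $+1$, so $h(\sigma,A,B)=\sigma A$ and only the $A_t$-components in Proposition~\ref{mainprop} play any role.

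For the forward direction ($t\ge 2$), Proposition~\ref{mainprop} declares the $A_t$ to be i.i.d.\ with the law from Proposition~\ref{segers1}. Since $M_{t-1}\equiv +1$ and $\hat\epsilon_t$ is standard normal, this law is simply that of $\phi(\hat\epsilon_t)=\sqrt{\alpha_1\hat\epsilon_t^{2}+\beta_1}$, giving~\eqref{eq:cs2} at once.

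The bulk of the work lies in identifying the backward transition law. By Definition~\ref{bftc}(iii.b), $A_{-1}\stackrel{d}{=}(M_{-1}/M_0\mid M_0=1)$, and since $M_0\equiv 1$ the adjointness formula~\eqref{easyadjointformula} (with index $2\alpha$ in place of $\alpha$ and $\sigma=1$) collapses to
\[
E\bigl(f(A_{-1})\bigr)=E\bigl(f(1/M_1)\,M_1^{2\alpha}\bigr)+\bigl(1-E(M_1^{2\alpha})\bigr)f(0)
\]
for bounded measurable $f$. The key observation is that $M_1=\sqrt{\alpha_1\epsilon^{2}+\beta_1}$ with $\epsilon$ standard normal, so the very definition~\eqref{alphaequation} of $\alpha$ gives $E(M_1^{2\alpha})=1$, which eliminates the atom at $0$. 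Taking $f=\mathds{1}_{(-\infty,x]}$ and rewriting the event $\{(\alpha_1\epsilon^{2}+\beta_1)^{-1/2}\le x\}$ as $\{\epsilon^{2}\ge (x^{-2}-\beta_1)/\alpha_1\}$, which is nonempty precisely when $x<\beta_1^{-1/2}$, produces~\eqref{eq:cs3} after a direct computation using the symmetry of the normal density.

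The main obstacle will be the careful bookkeeping around the two regular-variation indices: $\sigma_t$ has index $2\alpha$ while~\eqref{alphaequation} is phrased in terms of $\alpha$ for $\sigma_t^{2}$. Once one recognises that~\eqref{alphaequation} is exactly the normalisation that turns the tilted measure in the adjointness formula into a probability law, both the absence of a point mass at zero and the support $(0,\beta_1^{-1/2})$ follow automatically, completing the identification of $A_{-1}$ and hence the backward chain in~\eqref{eq:n2}.
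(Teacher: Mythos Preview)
Your proposal is correct and follows essentially the same route as the paper's proof: apply Proposition~\ref{mainprop} with $s=1$ and index $2\alpha$, use nonnegativity of $\sigma_t$ to discard the $B_t$, read off the forward law~\eqref{eq:cs2} from Proposition~\ref{segers1}, and identify the backward law via~\eqref{easyadjointformula}, where~\eqref{alphaequation} kills the atom at~$0$ and the remaining tilted expectation yields~\eqref{eq:cs3}. Your explicit remark about the $\alpha$ versus $2\alpha$ bookkeeping is a helpful clarification but not a departure from the paper's argument.
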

\begin{proof}
We apply Proposition \ref{mainprop}. Since $(\sigma_t)_{t \in \mathbb{Z}}\geq 0$ we omit the variables $(B_t)_{t \in \mathbb{Z}}$ from Proposition \ref{mainprop} and restrict ourselves to the distribution of $(A_t)_{t \in \mathbb{Z}}$. Now, \eqref{eq:cs2} follows from Proposition \ref{segers1}. With $\mu=\mathcal{L}(1,A_2)$ we are left to show that the distribution defined in \eqref{eq:cs3} equals the distribution of the second component of the adjoint measure $\mu^\ast=\mathcal{L}(1,A_{-1})$ corresponding to the BFTC($2\alpha, \mu)$. Using \eqref{easyadjointformula} (cf.\ also (4.3) in \cite{segers}) we have that
$$
 P(A_{-1}\le x)= E\left[\mathds{1}_{\{(\alpha_1 \hat{\epsilon}_1^2+\beta_1)^{-1/2}\leq x\}} (\alpha_1
  \hat{\epsilon}_1^2+\beta_1)^{\alpha}\right]+\left(1-E\left[(\alpha_1
  \hat{\epsilon}_1^2+\beta_1)^{\alpha}\right]\right),
$$
for a standard normal $\hat{\epsilon}_1$. This gives \eqref{eq:cs3} since the second summand equals zero by \eqref{alphaequation}. With $A_t, t \geq 2$ or $t \leq 0$, as above the assertion follows with Proposition \ref{mainprop}.
\end{proof}
Next, we take Proposition \ref{p:js} as a starting point for the derivation of \eqref{eq:ldx}.
To this end, note that by~(\ref{GARCHsigma}) we have
\begin{equation}\label{eq:zt}
\mathcal L(\epsilon_{t})  = \mathcal L\left( \left( \frac{\sigma_{t}^2-\alpha_0}{\sigma_{t-1}^2} -
  \beta_1\right)^{1/2} \alpha_1^{-1/2}
S_{t}\right),\quad t\in\mathbb Z,
\end{equation}
for a sequence $(S_t)_{t\in\mathbb Z}$ of i.i.d.\ random variables independent
of~$(\sigma_t)_{t\in\mathbb Z}$ with
\begin{equation}\label{eq:pm}
P(S_0=1) = P(S_0= -1) = 1/2.
\end{equation}
Now, by an application of the continuous mapping theorem in connection with Proposition~\ref{p:js} and \eqref{eq:zt} we get
\begin{eqnarray}\label{eq:mcl}
\nonumber \lim_{x \to \infty} \mathcal L \left( \frac{\zeta_{-m}}{x},\ldots,\frac{\zeta_n}{x}\;\middle \vert\; |\zeta_0|
  > x\right)&=& \mathcal L\left( \sigma_{-m}^{(\zeta )} \epsilon_{-m+1}^{(\zeta )},\ldots,
  \sigma_{n}^{(\zeta )} \epsilon_{n+1}^{(\zeta )} \right)\\ 
&=& \mathcal L\left(
  \zeta_{-m}^{(\zeta )},\ldots,\zeta_n^{(\zeta )}\right),
\end{eqnarray}
with
\begin{align*}
\epsilon_t^{(\zeta )} &= \left(\frac{(\sigma_{t}^{(\zeta )}/\sigma_{t-1}^{(\zeta )})^2 - \beta_1}{\alpha_1}\right)^{1/2}S_t^{(\zeta )}, \;\; t \in \mathbb{Z},
\end{align*}
for a sequence~$(S_t^{(\zeta )})_{t\in\mathbb Z}$ with the same distribution as~$(S_t)_{t\in\mathbb Z}$ and independent of~$(\sigma_t^{(\zeta )})_{t\in\mathbb Z}$.
Note that by the structure of $(\sigma_t^{(\zeta )})_{t \in \mathbb{Z}}$ it follows that
\begin{align}\label{epsilontilde}
\epsilon_t^{(\zeta )} &= \left(\frac{A_t^2 - \beta_1}{\alpha_1}\right)^{1/2}S_t^{(\zeta )}, \,t \geq 2, \;\;\;\mbox{} \;\; \epsilon_t^{(\zeta )} = \left(\frac{A_{t-1}^{-2} - \beta_1}{\alpha_1}\right)^{1/2}S_t^{(\zeta )},\, t\leq 0. 
\end{align}
Now, by~(\ref{eq:cs2}) it holds that $\mathcal L(\epsilon_t^{(\zeta )}) =\mathcal L( \epsilon_t)$, $t \geq 2$.
Further, we have that~$\zeta_0^{(\zeta )}$ is symmetric where
\begin{equation}\label{eq:tx}
P(|\zeta_0^{(\zeta )}| > y) = P(|\zeta_0|^{(\zeta)}>y)=y^{-2\alpha},\quad y\ge 1,
\end{equation} 
 by Lemma \ref{l:czv} and the definition in~(\ref{eq:mcl}).
For simulation from the r.h.s.\ of~(\ref{eq:mcl}) 
it is advantageous to write 
\begin{equation*}
\zeta_{\pm t}^{(\zeta )}=|\zeta_0|^{(\zeta )}\prod_{i=1}^t \frac{|\zeta_{\pm i}^{(\zeta )}|}{|
  \zeta_{\pm (i-1)}^{(\zeta )}|}\sign(\zeta_{\pm t}^{(\zeta )})=|\zeta_0|^{(\zeta )} \prod_{i=1}^t \frac{\sigma_{\pm i}^{(\zeta )}|
  \epsilon_{\pm i+1}^{(\zeta )}|}{\sigma_{\pm (i-1)}^{(\zeta )}|\epsilon_{\pm (i-1)+1}^{(\zeta )}|}\sign(\zeta_{\pm t}^{(\zeta )})
\end{equation*}
for $t\in\mathbb N$, such that replacing for~(\ref{eq:czv}), (\ref{eq:n2}) and~(\ref{epsilontilde}) yields the following proposition.
\begin{proposition}\label{easysimulation} For $m,n \in \mathbb{N}$ let 
\begin{itemize}
 \item $A_t$, $-m \leq t \leq -1,$ distributed according to~(\ref{eq:cs3}),
 \item $S_{t}^{(\zeta )}$, $-m \leq t \leq n,$ i.i.d.\ with the distribution of $S_0$ in~(\ref{eq:pm}),  
 \item $\epsilon_t^{(\zeta )}, 2 \leq t \leq n+1,$ i.i.d.\ standard normal
 \item $\epsilon_1^{(\zeta )}$ according to Lemma \ref{l:czv} and
 \item $|\zeta_0|^{(\zeta )} \sim \mbox{Par}(2\alpha)$
\end{itemize}
be mutually independent random variables. Then,
\begin{eqnarray}\label{eq:dwi}
\nonumber&& \lim_{x\to\infty} \mathcal L\left( \frac{\zeta_{-m}}{x},\ldots, \frac{\zeta_0}{x}, \ldots, \frac{\zeta_n}{x} \;\middle\vert\;  |\zeta_0| > x\right) \\
\nonumber &=& \mathcal L\Bigg( |\zeta_0|^{(\zeta )}\bigg(\prod_{i=1}^m A_{-i}\bigg)\frac{\left((A_{-m}^{-2}-\beta_1)/\alpha_1\right)^{1/2}}{|\epsilon_1^{(\zeta )}|}S_{-m}^{(\zeta )},\ldots,|\zeta_0|^{(\zeta )} S_0^{(\zeta )},\ldots,\\
 &&
 \quad 
        |\zeta_0|^{(\zeta )}
          \bigg(\prod_{i=1}^n(\alpha_1 (\epsilon_{i}^{(\zeta )})^2 +\beta_1)^{1/2}\bigg)\frac{|\epsilon_{n+1}^{(\zeta )}|}{|\epsilon_1^{(\zeta )}|}S_n^{(\zeta )}\Bigg).
\end{eqnarray}

\end{proposition}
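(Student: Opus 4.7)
The plan is to reduce the statement to algebraic substitution into the telescoping identity
\[
\zeta_{\pm t}^{(\zeta)} = |\zeta_0|^{(\zeta)} \prod_{i=1}^{t} \frac{\sigma_{\pm i}^{(\zeta)}\,|\epsilon_{\pm i+1}^{(\zeta)}|}{\sigma_{\pm (i-1)}^{(\zeta)}\,|\epsilon_{\pm (i-1)+1}^{(\zeta)}|}\,\sign(\zeta_{\pm t}^{(\zeta)}),
\]
which was already established in the paragraph preceding the proposition from $\zeta_t^{(\zeta)} = \sigma_t^{(\zeta)}\epsilon_{t+1}^{(\zeta)}$, cf.~(\ref{eq:mcl}). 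Thus, once the successive ratios $\sigma_{\pm i}^{(\zeta)}/\sigma_{\pm(i-1)}^{(\zeta)}$ and the surviving endpoint factors $|\epsilon_{\pm t+1}^{(\zeta)}|$ are rewritten in terms of the primitive variables listed in the proposition, the telescoping in the $\epsilon$-product collapses to $|\epsilon_{\pm t+1}^{(\zeta)}|/|\epsilon_1^{(\zeta)}|$, which is already the form appearing in \eqref{eq:dwi}.

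For the \emph{forward} direction $t \ge 1$, I would substitute the ratios by Proposition~\ref{p:js}: for $i \ge 2$ one has $\sigma_i^{(\zeta)}/\sigma_{i-1}^{(\zeta)}=A_i=\sqrt{\alpha_1 \hat\epsilon_i^2+\beta_1}$ with $\hat\epsilon_i$ i.i.d.\ standard normal, and relabelling $\hat\epsilon_i =:\epsilon_i^{(\zeta)}$ matches the proposition's specification in the $2 \le i \le n+1$ range. For $i=1$, Lemma~\ref{l:czv} gives $\sigma_1^{(\zeta)}/\sigma_0^{(\zeta)} = \phi(\epsilon_1^{(\zeta)})=\sqrt{\alpha_1(\epsilon_1^{(\zeta)})^2+\beta_1}$, so the product $\prod_{i=1}^{n}\sigma_i^{(\zeta)}/\sigma_{i-1}^{(\zeta)}$ equals $\prod_{i=1}^{n}(\alpha_1(\epsilon_i^{(\zeta)})^2+\beta_1)^{1/2}$. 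The sign factor is handled by $\sigma_t^{(\zeta)}>0$, hence $\sign(\zeta_t^{(\zeta)})=\sign(\epsilon_{t+1}^{(\zeta)})$; by \eqref{epsilontilde} this equals $S_{t+1}^{(\zeta)}$, and since the underlying symmetric signs are i.i.d.\ and independent of all magnitudes, an index shift turns them into the $(S_t^{(\zeta)})_{-m \le t \le n}$ in the proposition. The $t=0$ case is immediate from $\zeta_0^{(\zeta)}=\sigma_0^{(\zeta)}\epsilon_1^{(\zeta)}=|\zeta_0|^{(\zeta)}\sign(\epsilon_1^{(\zeta)})$ via Lemma~\ref{l:czv}.

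For the \emph{backward} direction $1 \le t \le m$, the ratio $\sigma_{-i}^{(\zeta)}/\sigma_{-(i-1)}^{(\zeta)}=A_{-i}$ is supplied directly by \eqref{eq:n2} and \eqref{eq:cs3}, yielding $\prod_{i=1}^{m}A_{-i}$. The endpoint factor $|\epsilon_{-m+1}^{(\zeta)}|$ is then replaced using the second half of \eqref{epsilontilde}: since $-m+1 \le 0$,
\[
|\epsilon_{-m+1}^{(\zeta)}|=\bigl((A_{-m}^{-2}-\beta_1)/\alpha_1\bigr)^{1/2},
\]
which gives precisely the first component of \eqref{eq:dwi}. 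Independence of $\epsilon_1^{(\zeta)}$, of the forward innovations $(\epsilon_i^{(\zeta)})_{i\ge 2}$, of the backward chain $(A_{-i})_{i\ge 1}$, and of $|\zeta_0|^{(\zeta)}$ is inherited from the independence structure built up in Lemma~\ref{l:czv} and Proposition~\ref{p:js}; the independence of the signs $(S_t^{(\zeta)})$ from the magnitudes follows from the corresponding property of the i.i.d.\ symmetric innovation sequence, cf.~\eqref{eq:pm}.

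The calculation is essentially bookkeeping, and the main obstacle I anticipate is precisely that: keeping the shifted index on the sign variables consistent (the proposition uses $S_t^{(\zeta)}$ indexed by $t$, while the defining relations for $(\epsilon_t^{(\zeta)})$ attach $S_t^{(\zeta)}$ to $\epsilon_t^{(\zeta)}$, so one relabelling is required), and making sure $|\epsilon_1^{(\zeta)}|$ is not double-counted — it enters $\sigma_0^{(\zeta)}$ through Lemma~\ref{l:czv} but must also appear as a standalone factor in the denominator of every term of~\eqref{eq:dwi} because the telescoping of the $\epsilon$-product stops at index $1$ rather than index $0$. Once these cancellations are written out carefully, nothing further is needed beyond invoking the already-proven limit relation \eqref{eq:mcl}.
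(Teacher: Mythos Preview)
Your proposal is correct and mirrors the paper's own argument: the paper states the telescoping identity $\zeta_{\pm t}^{(\zeta)}=|\zeta_0|^{(\zeta)}\prod_{i=1}^t \frac{\sigma_{\pm i}^{(\zeta)}|\epsilon_{\pm i+1}^{(\zeta)}|}{\sigma_{\pm(i-1)}^{(\zeta)}|\epsilon_{\pm(i-1)+1}^{(\zeta)}|}\sign(\zeta_{\pm t}^{(\zeta)})$ and then says that ``replacing for~(\ref{eq:czv}), (\ref{eq:n2}) and~(\ref{epsilontilde}) yields'' the proposition, which is exactly the substitution you carry out in detail. Your remarks on the sign-index relabelling and the role of $|\epsilon_1^{(\zeta)}|$ are correct and simply make explicit the bookkeeping the paper leaves implicit.
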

Finally, conditioning on~$\zeta_0>x$ as in~(\ref{eq:ldx}) instead of~$|\zeta_0|>x$ leads
to the same limit distribution as in~(\ref{eq:dwi}) but with $S_0^{(\zeta )}=1$ almost surely.

\section{Numerical Example}\label{simulation}

Our analysis of two connected time series was originally motivated by an idea to extend the approach considered in~\cite{deHaan} for ARCH$(1)$ processes
 to a simulation study for extremal characteristics of the more general GARCH$(1,1)$ class. Among such extremal measures 
 the extremal index is a well-known example. It characterizes the behavior of extreme events in a time series, 
i.e.\ the strength of dependence between subsequent high-level exceedances. More precisely, let $M_n:=\max (X_1, \dots, X_n)$ for $n \in \mathbb{N}$
 where~$(X_t)_{t \in \mathbb{N}}$ is a stationary univariate process with marginal distribution function~$F$. Let further~$(\widetilde{X_t})_{t \in \mathbb{N}}$ be the associated i.i.d.\ sequence with the same marginal distribution~$F$, and let accordingly~$\widetilde{M_n}:=\max (\widetilde{X_1}, \dots, \widetilde{X_n})$ for $n \in \mathbb{N}$. Assume that there exists a nonnegative number~$\theta_X$ such that for every~$\tau>0$ there is a sequence $(u_n)_{n \in \mathbb{N}}$ such that
 $$ \lim_{n \to \infty} P(\widetilde{M_n} \leq u_n)=e^{-\tau} \;\;\; \mbox{and} \;\;\; \lim_{n \to \infty} P(M_n \leq u_n) = e^{-\theta_X \cdot \tau}. $$
Then,~$\theta_X$ is called the extremal index of the process~$(X_t)_{t \in \mathbb{N}}$, and~$\theta_X \in [0,1]$. Note also that under an additional mild mixing condition the extremal index corresponds to the inverse of the mean cluster size of extreme values in the series, cf.~\cite{embrechts} for reference and for further details about the extremal index.  
Now, focussing on the GARCH$(1,1)$ model as defined in Section~\ref{s:intro} we find that~$\theta_{\zeta}=\lim_{m \to \infty}\theta_{\zeta,m}$ where
 \begin{equation}\label{thetam}\theta_{\zeta,m}=\lim_{x \to \infty}P\left(\max(\zeta_1, \dots, \zeta_m)<x \mid \zeta_0>x\right), \;\;\; m \in \mathbb{N}, \end{equation}
cf.\ \cite{embrechts} and \cite[Section 5.2]{ehlert}. 

In addition to the extremal index we shall in the following also consider two alternative extremal characteristics that may be evaluated by the same simulation approach. The so-called extremal coefficient function discussed in~\cite{fasen} is given by
 \begin{equation}\label{chi}\chi_\zeta(h)=\lim_{x \to \infty} P(\zeta_h>x \mid  \zeta_0>x) \end{equation}
for $h \in \mathbb{Z}$. 
Following the notion of usual autocovariances the extremal coefficient function gives the conditional probability of two extreme events separated by a lag~$h\in\mathbb Z$. 
For two reasons we will also briefly describe a modification of this concept,
i.e.\  a probability for threshold exceedances at a lag~$h\in\mathbb N$ given that $\zeta_0$ is 
not only extreme as in~\eqref{chi}, but given that~$\zeta_0$ is also at the
beginning of an extremal cluster in a time series, cf.~\cite[Chapter~5]{ehlert} for a discussion. 
More precisely, let
$\gamma_\zeta(h)=\lim_{m \to \infty}\gamma_{\zeta,m}(h), h \in \mathbb{N},$ where
 \begin{equation}\label{gammam} \gamma_{\zeta,m}(h)=\lim_{x \to \infty} P(\zeta_h>x \mid \zeta_0>x \mbox{ and } \zeta_i\leq x, i=-m, \dots, -1).\end{equation}
The first reason to touch on this characteristic in our study is its potential to serve as a complement to
the extremal coefficient function regarding questions of cluster structures in risk management and related applications that focus on the development of extremal events.
The second reason is related to the numerical simulation of~(\ref{thetam}) to~(\ref{gammam})
that will be based on the tail chain concept discussed in Section~\ref{GARCH}. While the evaluation of~$\theta$ and the extremal coefficient function~$\chi$ requires a series of runs of either the forward or the backward tail chain it is evident from~(\ref{gammam}) that the simulation of~$\gamma$ must be based on simultaneous runs of the forward and the backward tail chain at the same time.
Note at this point that we are not aware of any general closed form solutions for~(\ref{thetam}) to~(\ref{gammam}) that would include the GARCH$(p,q)$ model parameters, not even for $p=q=1$.

Our simulation setup generalizes similar methods proposed by~\cite{deHaan} and~\cite{laurini}. The algorithm used in~\cite{deHaan} is restricted to single time series which satisfy the assumptions of \cite{segers}
that do, however, not hold for the log returns in a GARCH$(1,1)$ setting. As a generalization of~\cite{laurini} our algorithm is in principle not restricted to models with symmetric innovations. Furthermore, to our knowledge, there have been no approaches to simulate from the backward tail chain so far. 

Taking the limit $x \to \infty$ in~\eqref{thetam} to~\eqref{gammam} we note that as indicated above all three characteristics can be expressed via the tail chain distribution. By simulation from this distribution (cf.\ Proposition \ref{easysimulation}) we may therefore evaluate these quantities by Monte Carlo estimation. In Table~\ref{t:es} we report the results of such a simulation study for $\theta_{\zeta,m}$ and $\gamma_{\zeta,m}(h), m=500$, (which we use as approximations
 for $\theta_\zeta$ and $\gamma_\zeta(h)$) and $\chi_\zeta(h)$ for $h=1,2,3$. The evaluation of probabilities is based on $N=10000$ draws. We fix $\alpha_1 + \beta_1= 0.99$ in the table
in order to reflect the stylized fact that $\alpha_1 + \beta_1$ is close to~one in many applications. The last row of Table \ref{t:es} is motivated by the following example.

\renewcommand{\tabcolsep}{4pt}
\begin{table}[h!]
\centering
\begin{tabular}{cccccccccc}
\hline $\alpha_1$ & $\beta_1$ & $\alpha $    & $\hat\theta_{\zeta,m}$ &
$\hat\chi_\zeta(1)$
&$\hat\chi_\zeta(2)$ & $\hat\chi_\zeta(3)$ &$\hat\gamma_{\zeta,m}(1)$ & $\hat\gamma_{\zeta,m}(2) $ & $\hat\gamma_{\zeta,m}(3) $
\rule[5.3ex]{0cm}{0cm} \rule[-3.3ex]{0cm}{0cm} 
\\\hline
0.99 & 0  & 1.014  & 0.570   &0.213 &
0.139 &0.104 &0.251  &0.167 & 0.125     
\\  
0.15 & 0.84  & 1.478  &   0.207  &  0.061  &0.063
 &0.065 & 0.153 & 0.144 & 0.139       
\\  
0.11 & 0.88  & 1.838  &  0.245  &   0.052&
0.042 & 0.038 & 0.110& 0.104 & 0.104
\\
0.09 & 0.90  & 2.203  & 0.304 &0.045 & 0.035
 &0.034 &0.089  & 0.085 & 0.081     
    \\  
0.07 & 0.92  & 2.885  & 0.397  &0.022 &0.020
 &0.020 & 0.055 & 0.050 & 0.053     
    \\  
0.04 & 0.95  & 5.991
  & 0.854  &  0.005 &0.004 &
0.003 &0.007 & 0.007 & 0.006 
  \\  
0.072 & 0.920 &   2.476  &  0.317    &  0.021&
0.020 & 0.027 & 0.063 & 0.064 & 0.066     \\\hline
\end{tabular}
\caption{Extremal measures ($m=500$) for selected GARCH(1,1)
 processes with $\alpha_1 + \beta_1 = 0.99$, as well as the process fitted in Example~\ref{ex:garch}. The results are based on $N=10000$ runs of the tail chain. The
 approximate confidence intervals are smaller than $\pm  0.01$ for all entries.}\label{t:es}
\end{table}
% \begin{table}[h!]
% \centering
% \begin{tabular}{ccccccccccc}
% \hline & $q_{0.950}$  & $q_{0.955}$ & $q_{0.960}$ & $q_{0.965}$ & $q_{0.970}$ & $q_{0.975}$ & $q_{0.980}$
%  & $q_{0.985}$ & $q_{0.990}$ & $q_{0.995}$ \\ \hline
% % 
% %
% $\hat \theta_{0.025}^B$ &0.210 &0.203 &0.198 & 0.198 & 0.192 & 0.189 &0.179
% &0.163 & 0.159 &0.137 \\
% %
% %
% $\hat \theta_{0.975}^B$ &  0.515 & 0.505 &0.512 &0.497 &0.506 &0.510 &0.514 &0.525 &0.548 &0.629 \\
% %
% %
% $\hat \theta_{S\&P~500}^B$ & 0.305  & 0.301 &0.339 &0.347 &0.348 &0.330 &0.325 &0.355 &0.363 &0.403 \\\hline
% \end{tabular}
% \caption{Blocks estimation ($m=126$) of the extremal index for different thresholds
%   represented by the respective quantiles $q$  for $N=1000$
%   independent GARCH(1,1) processes  of
%   length~7569  according to~(\ref{eq:pgs+p}). The first and second row  represent the simulated~95\%
%   confidence intervals. In the last row we include the blocks estimation of
%   the extremal index~$\theta_{S\&P~500}$ for the S\&P~500 data set.}\label{t:ese}
% \end{table}
\begin{example}\label{ex:garch}
We fit the GARCH(1,1) model given by~(\ref{GARCHX}) to a data set of log returns of the S\&P~500 index from 01.04.80 to 30.03.10 (7569 records).
The estimated parameters \cite{traplettihornik09} are 
\begin{equation}\label{eq:pgs+p}       
\hat \alpha_0 =0.1\times 10^{-5}\; (10^{-7}),\quad           \hat \alpha_1 = 0.072\; (0.002),\quad          \hat \beta_1 = 0.920\;(0.003)
\end{equation}
where the ML standard errors are given in brackets. 
% Note that $\hat\alpha_1 + \hat\beta_1 = 0.992$ being close to one is a common
% result for long financial return series, see e.g.~\cite{mikosch2} for a
% discussion. 
We include an evaluation of the corresponding extremal
measures by the above tail chain approach in the last row of Table~\ref{t:es}. In order to discuss the adequacy of a GARCH$(1,1)$ model with regard to the extremal behavior we compare the result of Table \ref{t:es} with the so-called blocks estimator  of the extremal index for the given data~\cite{wuertz,BGST}. For a block length of $m=126$ and a threshold corresponding to the empirical 0.95~quantile the estimator yields $\hat{\theta}=0.305\, (0.210, 0.515)$. 
Here, the brackets 
represent the simulated~95\%
confidence interval which is based on $N=1000$~independent GARCH(1,1) processes of length 7569 according to~\eqref{eq:pgs+p}.
As to the choice of the block length note that extremal events occuring in two distinct blocks are assumed to be independent. Here, six trading months correspond to 126 days and appear to be a reasonable order of magnitude. 
Given that our block length is a valid choice the fact that the result
falls within the simulated confidence interval indicates a
satisfactory agreement of the data set and a GARCH$(1,1)$ model with regard to their extremal behavior.
\end{example}

\section*{Appendix}
\paragraph{Details on the construction of $f$ in the proof of Lemma \ref{nonuniquenesslemma}}\quad \newline
Let $z = z_{i}=5^i, i \in \mathbb{N}_0,$ and $I_{1} = [z,2\frac{1}{4} z], I_{2} = [2\frac{1}{4} z, 3z],I_{3}=[3z,4z],I_{4}=[4z,5z]$. The restrictions $f_{i}:= f|_{I_{i}}$ will look as follows: $f_{1}$ is strictly increasing from 
value $z$ to $3z$, $f_{2}$ is symmetric on $I_{2}$, on the left half of $I_{2}$ it increases from $3z$ to $5z$. Finally, $f_{3}$ interpolates linearly between the values $3z$ and $z$, and $f_{4}$ between $z$ and $5z$.\\
Given the definitions of $f_{3}$ and $f_{4}$, we show that $f_{1}$, $f_{2}$ can be defined implicitely.\\
For the definition of  $f_{1}$ via $f_{1}^{-1}$ consider $x \in [z,3z]$. The function $f_{1}^{-1}$ has to satisfy 
$\frac{1}{x} \overset{!}{=} P(f(Y) > x) = P(f_{1}^{-1}(x) < Y < f_{3}^{-1}(x)) + P(f_{4}^{-1}(x) < Y) = \frac{1}{f_{1}^{-1}(x)} - \frac{1}{f_{3}^{-1}(x)} + \frac{1}{f_{4}^{-1}(x)}$
thus $\frac{1}{f_{1}^{-1}(x)} \overset{!}{=} \frac{1}{x} + \frac{1}{f_{3}^{-1}(x)} - \frac{1}{f_{4}^{-1}(x)} =: h(x)$.
Observe that $h'(x) = -\frac{1}{x^2} + \frac{1/2}{(f_{3}^{-1}(x))^2} + \frac{1/4}{(f_{4}^{-1}(x))^2} \leq -\frac{1}{(3z)^2} + \frac{1/2}{(3z)^2} + \frac{1/4}{(4z)^2} < 0$. Thus $f_{1}^{-1}(x)$ is strictly increasing and $f_{1}^{-1}(z) = z$, $f_{1}^{-1}(3z) = 2\frac{1}{4}z$ by the above formula. This shows that $f_{1}$ is well-defined as the inverse of $f_{1}^{-1}$.\\
For the  existence of $f_{2}$ on $I_{2}$ as described it suffices to show that $h(x) := \frac{1}{x} - P(f_{4}^{-1}(x) \geq Y) = \frac{1}{x} - \frac{1}{f_{4}^{-1}(x)}, x \in [3z,5z]$, is strictly decreasing (note that $h(5z) = 0$) which follows from $h'(x) = -\frac{1}{x^2} + \frac{1/4}{(f_{4}^{-1}(x))^2} \leq -\frac{1}{(5z)^2} + \frac{1/4}{(4.5z)^2} <0$.

%\begin{acknowledgements}
%If you'd like to thank anyone, place your comments here
%and remove the percent signs.
%\end{acknowledgements}

% BibTeX users please use one of
%\bibliographystyle{spbasic}      % basic style, author-year citations
%\bibliographystyle{spmpsci}      % mathematics and physical sciences
%\bibliographystyle{spphys}       % APS-like style for physics
%\bibliography{}   % name your BibTeX data base

% Non-BibTeX users please use

\end{document}